\newtheorem{theorem}{Theorem}[section]
\newtheorem{lemma}[theorem]{Lemma}
\newtheorem{proposition}[theorem]{Proposition}
\newtheorem{corollary}[theorem]{Corollary}
\newtheorem{definition}[theorem]{Definition}
\newtheorem{remark}[theorem]{Remark}
\numberwithin{equation}{section}
\begin{document}
\baselineskip=15.5pt

\title[Poincar\'e bundle]{Poincar\'e bundle for the fixed determinant moduli space on a nodal curve}

\author[Usha N. Bhosle]{Usha N. Bhosle\\
Indian Statistical Institute, Bangalore, India ,\\ 
Email: usnabh07@gmail.com}

%\address{Indian Statistical Institute, Bangalore, INDIA}

%\email{usnabh07@gmail.com; usha@math.tifr.res.in}

\subjclass[2010]{14H60}

\keywords{ Nodal curve, moduli spaces, Poincar\'e bundles, stability}

\begin{abstract}
            Let $Y$ be an integral nodal projective curve of arithmetic genus $g\ge 2$ with $m$ nodes defined over an algebraically closed field $k$ and $x$ a nonsingular closed point of $Y$.  Let $n$ and $d$ be coprime integers with $n\ge 2$. Fix a line bundle $L$ of degree $d$ on $Y$. Let $U_Y(n,d,L)$ denote the (compactified) "fixed determinant moduli space". We prove that the restriction $\mathcal{U}_{L,x}$ of the Poincare bundle  to $x \times U_Y(n,d,L)$ is stable with respect to the polarisation $\theta_L$ and its restriction to $x \times U'_Y(n,d,L)$, where $U'_Y(n,d,L)$ is the moduli space of vector bundles of rank $n$ and determinant $L$, is stable with respect to any polarisation. We show that the Poincar\'e bundle $\mathcal{U}_{L}$ on $Y \times U_Y(n,d,L)$ is stable with respect to the polarisation  $a \alpha + b \theta_L$  where $\alpha$ is a fixed ample Cartier divisor on $Y$ and $a, b$ are positive integers.  

\end{abstract}

\thanks{This work was done during my tenure as INSA Senior Scientist at Indian Statistical Institute, Bangalore.}

\maketitle

\section{introduction}

            Let $Y$ be an integral projective curve of arithmetic genus $g\ge 2$ defined over an algebraically closed field $k$ with at most nodes as singularities.  Let $n$ and $d$ be coprime integers with $n\ge 2$. Let $U_Y(n, d)$ denote the moduli space of stable torsion free sheaves of rank $n$ and degree $d$ on $Y$ and $U'_Y(n,d)$ its open dense sub variety corresponding to stable vector bundles.  For a fixed line bundle $L$ of degree $d$ on $Y$, let $U'_Y(n,d,L)$ denote the sub variety of $U'_Y(n,d)$ consisting of vector bundles with  determinant isomorphic to $L$.  Let $U_Y(n,d,L)$ be the closure of $U'_Y(n,d,L)$ in $U_Y(n,d)$ (with reduced structure).

    There exits a Poincar\'e sheaf 
         $$\mathcal{U}_L  \longrightarrow Y \times U_Y(n,d,L)\, ,$$ 
such that the restriction $\mathcal{U}_L\vert_{Y\times [E]} \cong E$ for any $E \in U_Y(n,d,L)$ \cite[Theorem 5.12']{N}.  Let $x$ denote a nonsingular  point of $Y$.  Then the restriction $\mathcal{U}_{L,x}$, of the Poincare sheaf to $x \times U_Y(n,d,L)$,  is a vector bundle on $x \times U_Y(n,d,L) \cong U_Y(n,d,L)$. There is a canonically defined ample line bundle $\theta$ on $U_Y(n,d)$ (the determinant of cohomology line bundle), let $\theta_L   \longrightarrow U_Y(n,d,L)$ be  its restriction. One has ${\rm Pic} \ U'_Y(n,d,L) \cong \mathbb{Z}$  \cite[Theorem I]{Bh1} and the restriction  of $\theta_L$ to $U'_Y(n,d,L)$ is the generator of Pic $U'_Y(n,d,L)$.  

          We do not know if ${\rm Pic} \ U_Y(n,d,L) \cong \mathbb{Z}$. We show that any normalisation of $U_Y(n,d,L)$ is locally factorial and has Picard group isomorphic to $\mathbb{Z}$ (subsection \ref{locfacto}). We deduce that the scheme $R$ of  $(g-1)$-dimensional subspaces of $k^{2g+2}$ which are isotropic for the pencil of quadrics with Segre symbol $[2~ 1 \cdots 1]$ is locally factorial .

              By the (semi)stability of a sheaf on a polarised variety, we mean the slope (semi)stability with respect to the polarisation. In case $X$ is smooth,  the semistability of $\mathcal{U}_{L,x}$ with respect to the polarisation $\theta_L$ and the stability of $\mathcal{U}_L$ with respect to suitable polarisations were proved in \cite{Ba} using Higgs bundles.  In this small note,  we generalise their results to nodal curves. We give a different and incredibly simple proof without using the spectral curves and Higgs bundles. Our main results are the following theorems. 
          
\begin{theorem} \label{thm1} (Theorem \ref{xsmooth})
Let $x\in Y$ be a nonsingular closed point.
\begin{enumerate}
\item The restriction ${\mathcal U}_{L,x}$ of the Poincare sheaf to $x \times U_Y(n,d,L)$ is a vector bundle, it is stable with respect to the polarisation $\theta_L$.
\item The restriction of ${\mathcal U}_{L,x}$ to $x \times U'_Y(n,d,L)$ is stable with respect to any polarisation.
\end{enumerate}
\end{theorem}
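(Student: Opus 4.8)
The plan is to reduce the statement to a question on a locally factorial model with Picard group $\mathbb{Z}$, to exploit the tautological action of the $n$-torsion of the generalized Jacobian, and to finish with the uniqueness of the Harder--Narasimhan filtration. First I would pass to a normalisation $\nu\colon \widetilde U \to U_Y(n,d,L)$; by the results recalled in the introduction $\widetilde U$ is locally factorial with $\operatorname{Pic}\widetilde U \cong \mathbb{Z}$, the ample generator being $\widetilde\theta := \nu^*\theta_L$. Since $\nu$ is finite and birational and $\mathcal{U}_{L,x}$ is a vector bundle, a destabilising subsheaf of $\mathcal{U}_{L,x}$ and its pullback agree over the open dense subset on which $\nu$ is an isomorphism, so $\mathcal{U}_{L,x}$ is $\theta_L$-stable if and only if $\nu^*\mathcal{U}_{L,x}$ is $\widetilde\theta$-stable. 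Moreover $\nu$ is an isomorphism over the smooth locus, hence over $U'_Y(n,d,L)$ (the determinant map $U'_Y(n,d)\to\operatorname{Pic}^d(Y)$ is smooth at locally free points because $H^1(\mathcal{E}nd E)\to H^1(\mathcal O_Y)$ is onto), so the same argument gives $\theta_L$-stability of $\mathcal{U}_{L,x}|_{U'_Y(n,d,L)}$; and since $\operatorname{Pic}U'_Y(n,d,L)\cong\mathbb{Z}$, slope-stability there is independent of the polarisation, so part (2) will follow from part (1). Thus it suffices to prove: on a locally factorial projective variety $X$ with $\operatorname{Pic}X=\mathbb{Z}\langle\theta\rangle$, the rank $n$ bundle $V:=\mathcal{U}_{L,x}$ is $\theta$-stable.

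Next I would reduce stability to semistability. As $\mathcal{U}_L$ is only determined up to a twist by a line bundle pulled back from $X$, one has $\det V=\theta^{a}$ with $a$ well defined modulo $n$, and I expect the standard determinant-of-cohomology computation (as in Drezet--Narasimhan, transported to the nodal setting) to give $\gcd(a,n)=1$, using $\gcd(n,d)=1$. Granting this, any saturated subsheaf $W\subsetneq V$ of rank $r$ with $0<r<n$ has $\det W=\theta^{b}$, so $\mu_\theta(W)=b/r$ has denominator at most $r<n$, while $\mu_\theta(V)=a/n$ has denominator exactly $n$; hence $\mu_\theta(W)\neq\mu_\theta(V)$ always, and $\theta$-semistability of $V$ is equivalent to $\theta$-stability. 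So it is enough to show $V$ is $\theta$-semistable.

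Now bring in $\Gamma:=J_Y[n]$, the $n$-torsion of the generalized Jacobian, acting on $U'_Y(n,d,L)$ (and, through its closure, on $X$) by $\phi_M\colon[E]\mapsto[E\otimes M]$; this is well defined since $\det(E\otimes M)=L\otimes M^{\otimes n}=L$. As $\gcd(n,d)=1$ the moduli space is fine, so the two families $(\mathrm{id}_Y\times\phi_M)^*\mathcal{U}_L$ and $\mathcal{U}_L\otimes p_Y^*M$ on $Y\times X$, which have the same member $E\otimes M$ over each $[E]$, differ by a pullback from $X$: $(\mathrm{id}_Y\times\phi_M)^*\mathcal{U}_L\cong\mathcal{U}_L\otimes p_Y^*M\otimes p_X^*B_M$ for some $B_M\in\operatorname{Pic}X$. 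Restricting to $\{x\}\times X$, and noting that $M_x$ is one-dimensional, gives $\phi_M^*V\cong V\otimes B_M$. Taking determinants and using that the automorphism $\phi_M$ acts trivially on $\operatorname{Pic}X=\mathbb{Z}$ (it preserves ampleness), we get $B_M^{\otimes n}=\mathcal{O}_X$, hence $B_M=\mathcal{O}_X$; therefore $\phi_M^*V\cong V$ for every $M\in\Gamma$. If $V$ were not semistable, its maximal destabilising subsheaf $W$ (the first term of the Harder--Narasimhan filtration) would be unique, hence canonical, so $\phi_M^*W=W$ inside $V$ for all $M\in\Gamma$.

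The remaining step — converting the $\Gamma$-invariance of the canonical subsheaf $W$ into a contradiction — is the point I expect to be the main obstacle. The subsheaf $W$ determines over a dense open $X_0\subseteq X$ an $r$-dimensional subspace $W_{[E]}\subseteq E_x=V_{[E]}$ depending algebraically on $[E]$, i.e.\ a $\Gamma$-equivariant section of the Grassmann bundle $\operatorname{Gr}_r(V)\to X$ over $X_0$; equivalently, $\det W=\theta^{b}$ with $b>ra/n$ yields a nonzero $\sigma\in H^0(X,\wedge^r V\otimes\theta^{-b})$ on which $\Gamma$ acts. I would try to rule this out in one of two ways. First, via a vanishing theorem: the canonical bundle of the moduli space is a negative power of $\theta$, so $\theta$ is very positive, and one should be able to show $H^0(X,\wedge^r\mathcal{U}_{L,x}\otimes\theta^{-b})=0$ for $b$ in the destabilising range, using Kodaira-type vanishing on $X$ together with the structure of $\wedge^r\mathcal{U}_{L,x}$. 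Alternatively, via Hecke curves: restricting $\sigma$ to a general Hecke $\mathbb{P}^1\subset X$ one checks that $V|_{\mathbb{P}^1}$ is balanced, hence has no sub-line-bundle of the degree forced by $W$ — and since the Hecke curves form a covering family of curves of complete-intersection type, this is also precisely the Mehta--Ramanathan reduction of semistability of $V$ on $X$ to semistability on such a curve. Either way one concludes that $V$ is semistable, hence stable, proving (1); combined with the polarisation-independence noted above, this gives (2).
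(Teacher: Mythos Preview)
Your approach is quite different from the paper's, and the final step contains a genuine gap. The paper argues directly via Hecke modifications: for a general $(0,1)$-stable bundle $F$ of determinant $L(x)$ the map $\psi_{F,x}\colon\mathbf{P}(F_x^*)\to U'_Y(n,d,L)$ is an embedding, and the pullback $\psi_{F,x}^*\mathcal{U}_{L,x}\otimes\mathcal{O}(-j)\cong\mathcal{E}_x$ sits in an exact sequence $0\to\mathcal{O}_{\mathbf{P}}(1)\to\mathcal{E}_x\to\Omega^1_{\mathbf P}(1)\to 0$. The key observation is that $\mathcal{E}_x$ is \emph{not} semistable --- $\mathcal{O}_{\mathbf{P}}(1)$ destabilises it --- but every subsheaf of slope $\geq\mu(\mathcal{E}_x)$ must contain this particular $\mathcal{O}_{\mathbf{P}}(1)$. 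One then chooses $F$ (equivalently the line $\ell$) generically so that the pullback $\psi_{F,x}^*\mathcal{U}'(-j)$ does \emph{not} contain $\mathcal{O}_{\mathbf{P}}(1)$, forcing $\mu(\psi_{F,x}^*\mathcal{U}')<\mu(\psi_{F,x}^*\mathcal{U}_{L,x})$; since $\mathrm{Pic}\,U'_Y(n,d,L)\cong\mathbb{Z}$ this inequality transfers back to the moduli space. No passage to the normalisation is needed, and stability (not just semistability) falls out directly.

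Your strategy via the $J_Y[n]$-action does not close. Two specific problems. First, the reduction from stability to semistability rests on $\gcd(a,n)=1$ for $\det V=\theta^a$, which you only ``expect'' from a determinant-of-cohomology computation you do not carry out. Second, and more seriously, the $\Gamma$-invariance of the maximal destabilising subsheaf $W$ buys you nothing by itself, and neither of your proposed exits works as stated. The vanishing-theorem route is entirely undeveloped. The Hecke-curve route rests on a false premise: the restriction of $V$ to a Hecke cycle is precisely the bundle $\mathcal{E}_x$ above, which is \emph{not} balanced and \emph{not} semistable (the sub-line-bundle $\mathcal{O}_{\mathbf P}(1)$ has slope $1>0=\mu(\mathcal{E}_x)$). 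So a Mehta--Ramanathan style restriction to Hecke curves cannot prove semistability of $V$; one needs the finer statement about \emph{which} subsheaves of $\mathcal{E}_x$ can destabilise, together with the freedom to choose the Hecke data so that $\mathcal{U}'$ avoids the bad sub-line-bundle --- exactly the paper's argument, at which point the entire $\Gamma$-action detour has played no role.
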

               Theorem \ref{thm1} generalises \cite[Proposition 1.4]{Ba} to nodal curves. In fact, it improves \cite[Proposition 1.4]{Ba} even in the case $Y$ is smooth as \cite[Proposition 1.4]{Ba} only proves semistability of ${\mathcal U}_{L,x}$ in the smooth case.

\begin{theorem}  \label{thm2} (Theorem \ref{thmpoincare})
    The Poincar\'e sheaf $\mathcal{U}_{L}$ on $Y \times U_Y(n,d,L)$ is stable with respect to the polarisation  $a \alpha + b \theta_L$  where $\alpha$ is a fixed ample Cartier divisor on $Y$ and $a, b$ are positive integers.  
\end{theorem}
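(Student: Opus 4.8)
The plan is to deduce Theorem \ref{thm2} from Theorem \ref{thm1} by a restriction argument, using no further input from moduli theory. Write $M := U_Y(n,d,L)$, set $D = \dim M$, and let $p\colon Y\times M\to Y$ and $q\colon Y\times M\to M$ be the two projections. For positive integers $a,b$ the class $H := a\,p^{*}\alpha + b\,q^{*}\theta_L$ is an ample Cartier divisor on $Y\times M$ (the external sum of the ample classes $\alpha$ and $\theta_L$), so $\mu_H$-stability makes sense, and I will show that $\mathcal U_L$ is $\mu_H$-stable. Recall that $\mathcal U_L$ is torsion-free of rank $n$, flat over $M$, with $\mathcal U_L|_{Y\times[E]}\cong E$ for every $[E]\in M$ and $\mathcal U_L|_{\{y\}\times M}\cong\mathcal U_{L,y}$; by Theorem \ref{thm1}, for a nonsingular point $y\in Y$ the sheaf $\mathcal U_{L,y}$ is a $\theta_L$-stable vector bundle, and of course each $E$ is a stable sheaf on $Y$.

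The first step is the numerical bookkeeping. Since $Y$ is a curve, $(p^{*}\alpha)^{2}=0$, so
\[
H^{D} \;=\; b^{D}(q^{*}\theta_L)^{D} \;+\; D\,a\,b^{\,D-1}\,(p^{*}\alpha)(q^{*}\theta_L)^{D-1}.
\]
Hence, for any torsion-free sheaf $\mathcal F$ of rank $r$ on $Y\times M$, the projection formula gives
\[
\deg_{H}\mathcal F \;=\; c_{1}(\mathcal F)\cdot H^{D} \;=\; A\,\deg\!\big(\mathcal F|_{Y\times[E]}\big) \;+\; B\,\deg_{\theta_L}\!\big(\mathcal F|_{\{y\}\times M}\big)
\]
for a general $[E]\in M$ and a general $y\in Y$, where $A := b^{D}(\theta_L^{D})>0$, $B := D\,a\,b^{\,D-1}\deg(\alpha)>0$, and $\deg_{\theta_L}(\,\cdot\,) := c_{1}(\,\cdot\,)\cdot\theta_L^{D-1}$. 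Writing $d_{1}(\mathcal F) := \deg(\mathcal F|_{Y\times[E]})$ and $e_{1}(\mathcal F) := \deg_{\theta_L}(\mathcal F|_{\{y\}\times M})$, this reads
\[
\mu_{H}(\mathcal F) \;=\; \frac{A\,d_{1}(\mathcal F) + B\,e_{1}(\mathcal F)}{r}, \qquad
\mu_{H}(\mathcal U_L) \;=\; \frac{A\,d + B\,e}{n},
\]
with $d = \deg E$ and $e = \deg_{\theta_L}\mathcal U_{L,y}$.

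Now suppose, for a contradiction, that $\mathcal U_L$ is not $\mu_H$-stable, and choose a saturated destabilizing subsheaf $0\ne\mathcal F\subsetneq\mathcal U_L$, so that $\mathcal U_L/\mathcal F$ is torsion-free; then $1\le r := {\rm rk}\,\mathcal F\le n-1$ and $\mu_{H}(\mathcal F)\ge\mu_{H}(\mathcal U_L)$. By generic flatness of the quotient $\mathcal U_L/\mathcal F$ over $M$, respectively over $Y$, for a general $[E]\in M$ the restriction $\mathcal F|_{Y\times[E]}$ is a rank-$r$ subsheaf of $E$, and for a general nonsingular $y\in Y$ the restriction $\mathcal F|_{\{y\}\times M}$ is a rank-$r$ subsheaf of $\mathcal U_{L,y}$. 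Since $r<n$, stability of $E$ forces $d_{1}(\mathcal F)/r < d/n$, and Theorem \ref{thm1}(1) forces $e_{1}(\mathcal F)/r < e/n$. Multiplying these by $A>0$ and $B>0$ and adding yields $\mu_{H}(\mathcal F) < \mu_{H}(\mathcal U_L)$, a contradiction. Therefore $\mathcal U_L$ is $\mu_H$-stable, which is Theorem \ref{thm2}.

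The argument is short, exactly as the abstract promises; the only places needing care — and hence the likeliest source of trouble — are the intersection-theoretic bookkeeping on the (possibly singular, and non-normal along $Y$) product $Y\times M$, and the verification that the generic restrictions of the saturated subsheaf $\mathcal F$ really are subsheaves of $E$ and of $\mathcal U_{L,y}$ of the expected rank and degree, i.e. that no degree is lost to torsion. Both are handled by generic flatness together with the definition of degree via Hilbert polynomials, but they are precisely what legitimize the two slope identities above.
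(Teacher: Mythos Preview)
Your proof is correct and follows essentially the same route as the paper's: expand $H^{D}$ using $(p^{*}\alpha)^{2}=0$, then restrict a destabilizing object to a general $Y$-fiber and a general $M$-fiber and invoke stability of $E$ and Theorem~\ref{thm1} respectively. The only cosmetic differences are that the paper phrases everything in terms of a torsion-free quotient $Q$ rather than a saturated subsheaf, and it handles the torsion-on-restriction issue explicitly (passing to $Q_Y/\mathrm{tor}$ and using $d(Q_Y)\ge d(Q_Y/\mathrm{tor})$) rather than appealing to generic flatness.
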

           Theorem \ref{thm2}  generalises \cite[Theorem 1.5]{Ba} to nodal curves. Our proofs of these two theorems were inspired by the ideas in \cite{BBN}. Recently, the results of \cite{BBN} are generalised to nodal curves by myself and my collaborators \cite{ABS}.  We use some computations in this paper in our proofs in \cite{ABS}

\section{Preliminaries}

         In this section, we set up the notation and recall some definitions and results needed.
         
\subsection{Notation} \hfill

      Let $Y$ be an integral projective curve with only $m$ nodes (ordinary double points) as singularities defined over an algebraically closed field. Let $g=h^1(Y, \mathcal{O}_Y)$ be the arithmetic genus of $Y$, we assume that $g \ge 2$. Let 
      $$p : X \longrightarrow Y$$
be the normalisation map. For a node $y_j\in Y$, let $x_j$ and $z_j$ denote the points of $X$ lying over $y_j$. For each $j = 1, \cdots, m$, let $D_j = x_j+z_j$ denote the divisor on $X$.
        
      For a torsion free sheaf ${F}$ on $Y$, let $r(F)$ denote the (generic) rank of $F$ and   
$d({F})=\chi(F)- r({F})\chi(\mathcal{O}_Y)$ denote the degree of ${F}$. Let $\mu(F) = d(F) / r(F)$ denote the slope of $F$. 

                            Let $H$ be an ample line bundle on a variety $Z$ of dimension $m \ge 2$ and $F$ a torsionfree sheaf on $Z$. Then degree of $F$ with respect to $H$, denoted by $d(F)$, is the degree of the restriction of $F$ to a general complete intersection curve on $Z$ of the form $D^{m-1}$ for some divisor $D \in \vert H\vert$.  We remark that if the singular set of $Z$ has codimension at least $2$, then the general complete intersection curve can be chosen to lie in the set of nonsingular points of $Z$. Hence if $Z' \subset Z$ is an open subset contained in the set of nonsingular points of $Z$ such that its complement     
has codimension at least $2$,  the degree of $F$ with respect to $H$ on $Z$ is the same as the degree of $F\vert_{Z'}$ with respect to $H\vert_{Z'}$. 
                 
If $Z$ is projective, then we have 
                 $$d(F) = c_1(F). c_1(H)^{m-1} [Z]\, ,$$
where $[Z]$ denotes the fundamental class of $Z$.
In case Pic $Z \cong \mathbb{Z}$ with $c_1(H)$ the ample generator, one can write $c_1(F)= \lambda c_1(H)$ for some integer $\lambda$ so that $d(F) = \lambda N, N = c_1(H)^m [Z]$. If $Z$ is a  projective space ${\bf P}$, then we take $H$ to be the tautological line bundle $\mathcal{O}_{\bf P}(1)$. Thus $N=1$ for the projective space ${\bf P}$.                   

\subsection{Moduli spaces} \hfill
        
        Let $J(Y) := \ {\rm Pic}^d(Y)$ be the generailsed Jacobian of $Y$ i.e., the moduli space of lines bundles of degree $d$ on $Y$. Let $U_Y(n, d)$ denote the moduli space of stable torsion free sheaves of rank $n$ and degree $d$ on $Y$. It is irreducible \cite{R}, it is a seminormal projective variety of dimension $n^2(g-1)+1$ \cite[Theorem 4.2]{Su}.  Let $U'_Y(n,d)$ be its open dense subvariety corresponding to stable vector bundles on $Y$. From the remark on p.167, section 7 of \cite{N}, one deduces that $U'_Y(n,d)$ is a normal quasi-projective variety, being the GIT quotient of a nonsingular variety $R'$ by PGL$(N)$ for some $N>>0$.  Since $n$ and $d$ are coprime, the points of $U'_Y(n,d)$ correspond to stable vector bundles.  Since the automorphisms of stable bundles are scalars,  $R'$ is a principal PGL$(N)$-bundle over $U'_Y(n,d)$, it follows that $U'_Y(n,d)$ is a nonsingular quasi-projective variety. For a fixed line bundle $L$ on $Y$, let $U'_Y(n,d,L)$ denote the sub variety of $U'_Y(n,d)$ consisting of vector bundles with  determinant isomorphic to $L$. There is a smooth determinant morphism from $U'_Y(n,d)$ onto the nonsingular variety $J(Y)$. It follows that $U'_Y(n,d,L)$ is a nonsingular variety of dimension $(n^2 - 1)(g-1)$.  Let $U_Y(n,d,L)$ denote the closure of $U'_Y(n,d,L)$ in $U_Y(n,d)$ (with reduced structure). 
        
            The moduli varieties $U'_Y(n,d)$ and $U'_Y(n,d,L)$ are locally factorial (\cite[Theorem I]{Bh1}, \cite[Theorem 3A]{B6}). One has 
            $${\rm Pic} \ U'_Y(n,d,L) \cong \mathbb{Z}$$ 
(for $g \ge 2$) \cite[Theorem I]{Bh1}. There is a canonically defined ample line bundle $\theta$ on $U_Y(n,d)$, let 
$$\theta_L   \longrightarrow U_Y(n,d,L)$$ 
be  its restriction. The restriction  of $\theta_L$ to $U'_Y(n,d,L)$ is the generator of Pic $U'_Y(n,d,L)$ \cite[Proposition 3.2]{Bh1}.

\subsection{$(l,m)$-stability for torsionfree sheaves} \hfill

 We make the following definition generalising \cite[Definition 5.1]{NR2}.
\begin{definition}
Let $l$ and $m$ be integers. A torsionfree sheaf $F$ on $Y$ is $(l,m)$-stable if, for every proper subsheaf $G$ of $F$ (with a torsionfree quotient), one has 
$$\frac{d(G) + l}{r(G)} \ < \  \frac{d(F) + l - m}{r(F)}\, .$$   
\end{definition}
        We remark that a torsionfree sheaf $F$ is stable if and only if it is $(0,0)$-stable.

\begin{proposition} \label{L2Bis} \cite[Proposition 2.2]{Bh3}
The $(0,1)$-stable vector bundles $F$, of rank $n$ and determinant $L'$ of degree $d'$ with $n$ coprime to $d'-1$, form an open subset of the moduli space $U_{L' }^s(n,d')$ of stable torsionfree sheaves.
\end{proposition}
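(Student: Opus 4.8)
The plan is to realise the asserted subset as the intersection of two open conditions --- ``locally free'' and ``$(0,1)$-stable'' --- on a flat family of stable torsionfree sheaves of rank $n$ and determinant $L'$. Recall that $U^s_{L'}(n,d')$ is the quotient of a suitable $\mathrm{PGL}(N)$-invariant locally closed subscheme $R$ of a Quot scheme, with quotient map $q\colon R\to U^s_{L'}(n,d')$, and that there is a flat family $\mathcal{E}$ on $R\times Y$ of stable torsionfree sheaves realising every point of $U^s_{L'}(n,d')$. Since stable sheaves have only scalar automorphisms, $\mathrm{PGL}(N)$ acts freely on $R$ and $q$ is open, so a subset of $U^s_{L'}(n,d')$ is open precisely when its preimage in $R$ is. As local freeness and $(0,1)$-stability depend only on the isomorphism class of $\mathcal{E}_r$, it is enough to show that $R_{0}:=\{\,r\in R:\mathcal{E}_r\ \text{is locally free and}\ (0,1)\text{-stable}\,\}$ is open in $R$.

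The locally free locus of a coherent sheaf that is flat over the base is open (it is the complement of the support of a coherent sheaf detecting failure of local freeness), so the real point is openness of $(0,1)$-stability. Here the coprimality hypothesis is used: if $G\subsetneq F$ has torsionfree quotient and $d(G)/r(G)=(d'-1)/n$, then $n\,d(G)=r(G)(d'-1)$ together with $\gcd(n,d'-1)=1$ forces $n\mid r(G)$, impossible for $0<r(G)<n$; hence for sheaves of this rank and degree $(0,1)$-stability coincides with $(0,1)$-semistability, and it is enough to show that the latter is an open condition. This is the usual semicontinuity argument. A stable torsionfree sheaf $F$ of slope $d'/n$ fails $(0,1)$-semistability exactly when it admits a proper torsionfree quotient $Q$ with $d(Q)\le\lceil r(Q)(d'-1)/n\rceil$; since $F$ is stable, such a $Q$ also satisfies $d(Q)>r(Q)\,d'/n$, so for each of the finitely many ranks $r(Q)\in\{1,\dots,n-1\}$ the degree $d(Q)$, and hence the Hilbert polynomial of $Q$, lies in a finite set. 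By boundedness, the relative Quot schemes over $R$ attached to these finitely many Hilbert polynomials are proper over $R$; the locus of $r$ admitting such a quotient is the union of their images, hence closed, and (using again that the $\mathcal{E}_r$ are stable, so that the torsionfree part of any such quotient is again of the bad numerical type) it is exactly the non-$(0,1)$-semistable locus. Its complement --- the $(0,1)$-semistable, so $(0,1)$-stable, locus --- is therefore open; intersecting with the locally free locus shows $R_{0}$ is open, and hence so is its image in $U^s_{L'}(n,d')$.

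The step requiring genuine care is this last finiteness: a priori the torsionfree quotients of $F$ need not be of bounded degree, and it is precisely the stability of the members of the family that confines the degrees of the relevant quotients to a finite range so that the Quot-scheme/semicontinuity mechanism applies. Everything else --- existence of the parameter scheme $R$ with its family, the relevant Quot schemes, and boundedness of families of (sub)sheaves with prescribed numerical invariants --- is available for torsionfree sheaves on the nodal curve $Y$ just as in the smooth case, so the nodes cause no additional trouble.
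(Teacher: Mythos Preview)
The paper does not supply its own proof of this proposition; it is quoted verbatim from \cite{Bh3}. So there is nothing in the present paper to compare against, and the question is simply whether your argument is sound. It is, and it follows the standard template that such results (going back to Narasimhan--Ramanan in the smooth case) are proved by: pass to a parameter scheme $R$ carrying a flat universal family, use the arithmetic identity forced by $\gcd(n,d'-1)=1$ to replace $(0,1)$-stability by $(0,1)$-semistability, and then show the non-$(0,1)$-semistable locus is closed by bounding the Hilbert polynomials of possible destabilising quotients and invoking properness of the corresponding relative Quot schemes. Your handling of the torsion issue is also correct: if a member $\mathcal{E}_r$ admits a quotient $Q$ with one of the finitely many bad Hilbert polynomials, its torsionfree part $Q'$ has the same rank and no larger degree, while stability of $\mathcal{E}_r$ forces $\mu(Q')>d'/n$, so $Q'$ still lies in the destabilising range and the image of the Quot schemes is exactly the non-$(0,1)$-semistable locus. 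None of these ingredients is sensitive to the nodes of $Y$, as you note.
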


\subsection{Local factoriality}  \label{locfacto}    \hfill           

 We do not know if $U_Y(n,d,L)$ is locally factorial or normal.  We make a few observations. 

\begin{lemma} \label{Cl}
$$(1) \ Cl(U_Y(n,d,L)) \cong \mathbb{Z}\, .$$
(2)  Let $\tilde{U}_Y(n,d,L)$ denote a normalisation of $U_Y(n,D,L)$ and $\pi: \tilde{U}_Y(n,d,L) \rightarrow U_Y(n,d,L)$ the normalisation map. Then  $\tilde{U}_Y(n,d,L)$ is locally factorial and 
$${\rm Pic} \ \tilde{U}_Y(n,d,L) \ \cong \ \mathbb{Z}\, .$$      
\end{lemma}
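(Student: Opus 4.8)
The plan is to transfer structure from the open, well-understood locus $U'_Y(n,d,L)$ to its closure and its normalisation. For part (1), I would first observe that $U'_Y(n,d,L)$ is a smooth open dense subvariety of $U_Y(n,d,L)$ whose complement $U_Y(n,d,L)\setminus U'_Y(n,d,L)$ has codimension at least $2$. This codimension statement is the crucial input: the boundary parametrises strictly torsionfree (non-locally-free) stable sheaves, and a standard dimension count (going back to Sun and Nagaraj--Seshadri) shows that the locus of sheaves that fail to be locally free at a node drops dimension by at least $n$, hence by at least $2$ since $n\ge 2$. Given this, restriction gives an exact sequence $\mathrm{Cl}(U_Y(n,d,L))\to \mathrm{Cl}(U'_Y(n,d,L))\to 0$ with the kernel generated by the classes of the boundary components; but the complement has codimension $\ge 2$, so no boundary divisors are added and the restriction map $\mathrm{Cl}(U_Y(n,d,L))\xrightarrow{\ \sim\ }\mathrm{Cl}(U'_Y(n,d,L))$ is an isomorphism. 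Since $U'_Y(n,d,L)$ is smooth, $\mathrm{Cl}(U'_Y(n,d,L))\cong \mathrm{Pic}\, U'_Y(n,d,L)\cong\mathbb{Z}$ by \cite[Theorem I]{Bh1}, giving $\mathrm{Cl}(U_Y(n,d,L))\cong\mathbb{Z}$.

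For part (2), let $\pi:\tilde U_Y(n,d,L)\to U_Y(n,d,L)$ be a normalisation. Since $U'_Y(n,d,L)$ is already normal (indeed smooth), $\pi$ is an isomorphism over $U'_Y(n,d,L)$; identify $U'_Y(n,d,L)$ with the corresponding open subset of $\tilde U_Y(n,d,L)$. Its complement in $\tilde U_Y(n,d,L)$ is $\pi^{-1}$ of a codimension-$\ge 2$ set, and since $\pi$ is finite it remains of codimension $\ge 2$. Now $\tilde U_Y(n,d,L)$ is a normal variety, so restriction to the big open subset again induces isomorphisms on divisor class groups: $\mathrm{Cl}(\tilde U_Y(n,d,L))\cong \mathrm{Cl}(U'_Y(n,d,L))\cong\mathbb{Z}$, and likewise $\mathrm{Pic}\,\tilde U_Y(n,d,L)\hookrightarrow \mathrm{Pic}\,U'_Y(n,d,L)\cong\mathbb{Z}$. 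It remains to show the inclusion $\mathrm{Pic}\,\tilde U_Y(n,d,L)\to\mathrm{Cl}\,\tilde U_Y(n,d,L)$ is surjective, i.e. that $\tilde U_Y(n,d,L)$ is locally factorial; equivalently, the ample generator $\theta_L$ of $\mathrm{Pic}\,U'_Y(n,d,L)$, viewed as a Weil divisor class, extends to a Cartier divisor class on the normalisation. For this I would pull back the line bundle $\theta_L$ (which is defined as a genuine line bundle on all of $U_Y(n,d,L)$, being the restriction of the determinant-of-cohomology bundle $\theta$) along $\pi$; the pullback $\pi^*\theta_L$ is a line bundle on $\tilde U_Y(n,d,L)$ whose restriction to $U'_Y(n,d,L)$ is the generator of $\mathrm{Pic}\,U'_Y(n,d,L)\cong\mathbb{Z}$. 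Hence $\pi^*\theta_L$ generates $\mathrm{Pic}\,\tilde U_Y(n,d,L)$, and since $\mathrm{Cl}(\tilde U_Y(n,d,L))\cong\mathbb{Z}$ is generated by the same class (via the big-open-subset isomorphisms), the natural map $\mathrm{Pic}\to\mathrm{Cl}$ is an isomorphism. Therefore $\tilde U_Y(n,d,L)$ is locally factorial with $\mathrm{Pic}\,\tilde U_Y(n,d,L)\cong\mathbb{Z}$.

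The main obstacle I expect is the codimension estimate on the boundary $U_Y(n,d,L)\setminus U'_Y(n,d,L)$: everything downstream is formal once one knows this complement has codimension $\ge 2$. One must be careful that the estimate holds after fixing the determinant (the determinant-fixing condition cuts down both the ambient space and the boundary by the same amount $g=\dim J(Y)$, so the relative codimension is unchanged), and that it holds component-by-component of the boundary stratification by the number and type of nodes at which local freedom fails; the worst case is failure at a single node, contributing codimension exactly $n$, and $n\ge 2$ is precisely what is needed. A secondary point requiring a line of justification is that $\theta_L$ is honestly a line bundle on the possibly non-normal $U_Y(n,d,L)$ and not merely a reflexive sheaf — this is supplied by its construction as the restriction of the determinant-of-cohomology bundle $\theta$ on $U_Y(n,d)$, which is a line bundle by \cite{N}.
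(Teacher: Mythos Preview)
Your proposal is correct and follows essentially the same route as the paper: for part (1) you use the codimension-$\ge 2$ boundary to identify $\mathrm{Cl}(U_Y(n,d,L))$ with $\mathrm{Cl}(U'_Y(n,d,L))\cong\mathrm{Pic}\,U'_Y(n,d,L)\cong\mathbb{Z}$, and for part (2) you use normality, finiteness of $\pi$, and the global line bundle $\pi^*\theta_L$ to show that the restriction maps on $\mathrm{Pic}$ and $\mathrm{Cl}$ to the big open subset are isomorphisms, exactly as the paper does (it packages the same four maps into a commutative square). The only cosmetic difference is that the paper simply cites the codimension estimate rather than sketching its proof as you do.
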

\begin{proof}
(1)  As $U_Y(n,d,L)$ is nonsingular in codimension $1$, by \cite[Proposition 6.5, p.133]{Ha}, we have an isomorphism of class groups
       $$Cl(U'_Y(n,d,L)) \cong Cl(U_Y(n,d,L))\, .$$
Since  $U'_Y(n,d,L)$ is locally factorial, one has 
$Cl(U'_Y(n,d,L)) \cong \ {\rm Pic} \ U'_Y(n,d,L) \cong \mathbb{Z}\, .$ Hence 
$$Cl(U_Y(n,d,L)) \cong \mathbb{Z}\, .$$
(2)  There is a commutative diagram
$$
\begin{array}{ccc}
      {\rm Pic} \ (\tilde{U}_Y(n,d,L) )    & \stackrel{\phi}{\longrightarrow} &  Cl(\tilde{U}_Y(n,d,L))\\
              {}                            &               {}                                &     {}\\
   \downarrow res_P              &             {}                              &          \downarrow {res_C}\\
               {}                            &              {}                              &    {}  \\
     {\rm Pic} \  \pi^{-1}(U'_Y(n,d,L))      &     \stackrel{\phi'}{\longrightarrow}  &   Cl(\pi^{-1}U'_Y(n,d,L))
\end{array}
$$
    As $\pi^{-1}U'_Y(n,d,L) \cong U'_Y(n,d,L), \tilde{U}_Y(n,d,L)$ is nonsingular in codimension $1$. Hence by \cite[Proposition 6.5, p.133]{Ha} the map $res_C$ is an isomorphism. Since  $\pi^{-1}U'_Y(n,d,L)$ is locally factorial, $\phi'$ is an isomorphisms. The map $res_P$ is a surjection as ${\rm Pic} \  \pi^{-1}U'_Y(n,d,L)$ is generated by the restriction of the line bundle $\pi^*\theta_L$ on $\tilde{U}_Y(n,d,L)$. Since $\tilde{U}_Y(n,d,L)$ is normal and $\pi$ being finite, the codimension of the complement of $\pi^{-1}(U'_Y(n,d,L))$ in $\tilde{U}_Y(n,d,L)$ is at least $2$, the map $res_P$ is injective. Then $res_P$ is an isomorphism. From the commutativity of the diagram, it follows that $\phi$ is an isomorphism and hence $\tilde{U}_Y(n,d,L)$ is locally factorial with Picard group isomorphic to the group of integers.  

\end{proof}

\begin{corollary}
Consider a singular pencil of quadrics with Segre symbol $[2~1 \cdots 1]$ given by 
$$q_1 = \sum _{i=1}^{2g} X_i^2 + 2X_0Y_0, \ \
q_2 = \sum _{i=1}^{2g} a_i X_i^2 + (X_0^2 + 2a_0X_0Y_0),$$
with $a_i, i=0,1,\cdots ,2g,$ distinct scalars. 
Let $R$ be the scheme of  $(g-1)$-dimensional subspaces of $k^{2g+2}$ which are isotropic for
this pencil.  Then $R$ is locally factorial. Let $R_0$ be the subscheme of $R$ consisting of those subspaces which contain the unique singular point of the
intersection of quadrics of the pencil. Let $R':= R - R_0$. Then 
$${\rm Pic} \ R = \ {\rm Pic} \ R' = \mathbb{Z}\, .$$ 
\end{corollary}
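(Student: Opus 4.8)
The plan is to recognise $R$, and its open piece $R'$, as (the normalisation of) the compactified fixed‑determinant moduli space of a suitable one‑nodal hyperelliptic curve, and then to quote Lemma \ref{Cl}. To the pencil $\{q_1,q_2\}$ with Segre symbol $[2~1\cdots 1]$ and distinct scalars $a_i$ one associates the double cover $Y$ of ${\bf P}^1$ branched at the $2g$ points $a_1,\dots,a_{2g}$ (the simple roots of the discriminant) and carrying a single node over the point $a_0$ (the double root, coming from the Jordan block of size two). Then $Y$ is an integral hyperelliptic curve with exactly one node and arithmetic genus $g\ge 2$. Fix a line bundle $L$ of degree $1$ on $Y$. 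The Narasimhan--Ramanan and Desale--Ramanan description of stable rank‑two bundles of fixed odd determinant on a smooth hyperelliptic curve in terms of $(g-1)$‑dimensional subspaces isotropic for the associated smooth pencil of quadrics extends to this singular pencil: there is an isomorphism $R\cong U_Y(2,1,L)$ under which $R_0$ corresponds to the sheaves that are not locally free at the node, and hence $R'=R\setminus R_0$ corresponds to $U'_Y(2,1,L)$. In particular $R$ is reduced, irreducible of dimension $(2^2-1)(g-1)=3(g-1)$, and $R'\cong U'_Y(2,1,L)$ is nonsingular by the facts recalled earlier.

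Next I verify that $R$ is normal. Inside the Grassmannian $G:={\rm Gr}(g-1,2g+2)$, which is smooth of dimension $(g-1)(g+3)$, the scheme $R$ is the zero locus of a section of the rank $g(g-1)$ bundle ${\rm Sym}^2(\mathcal{S}^{\vee})\oplus{\rm Sym}^2(\mathcal{S}^{\vee})$, where $\mathcal{S}$ is the tautological subbundle and the two summands impose isotropy with respect to $q_1$ and $q_2$. Its expected dimension $(g-1)(g+3)-g(g-1)=3(g-1)$ equals $\dim R$, so $R$ is a local complete intersection in $G$, hence Cohen--Macaulay, in particular $S_2$. Since $R'\cong U'_Y(2,1,L)$ is nonsingular, the singular locus of $R$ is contained in $R_0$; and any subspace lying in $R_0$ must contain the unique singular point $p_0=[0:1:0:\cdots:0]$ of $Q_1\cap Q_2$ and therefore be contained in the common tangent hyperplane $T_{p_0}Q_1=T_{p_0}Q_2=\{X_0=0\}$. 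Passing to the quotient by $\langle p_0\rangle$ identifies $R_0$ with the isotropic $(g-2)$‑subspace scheme of a smooth pencil of quadrics in a $2g$‑dimensional space, which has dimension $3(g-2)=3(g-1)-3$. Thus $\dim R_0=3(g-1)-3$, so $R$ is regular in codimension two, and Serre's criterion shows $R$ is normal.

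Being normal, $R$ coincides with its own normalisation, i.e. $R\cong\tilde U_Y(2,1,L)$ in the notation of Lemma \ref{Cl}, whose part (2) then applies verbatim: $R$ is locally factorial with ${\rm Pic}\,R\cong\mathbb{Z}$. Finally, since $R$ is locally factorial and $R\setminus R'=R_0$ has codimension $3\ge 2$, restriction gives an isomorphism ${\rm Pic}\,R\cong{\rm Pic}\,R'$, while ${\rm Pic}\,R'\cong{\rm Pic}\,U'_Y(2,1,L)\cong\mathbb{Z}$ by \cite[Theorem I]{Bh1}. This proves the corollary. The only non‑formal ingredient is the nodal extension of the Desale--Ramanan isomorphism, in particular the identification of $R_0$ with the non‑locally‑free boundary of $U_Y(2,1,L)$; once this is granted, normality of $R$ and the Picard computation follow immediately from Lemma \ref{Cl} and the preliminaries, so that step is the main obstacle.
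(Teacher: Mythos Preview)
Your approach is essentially the paper's: identify $R$ with (the normalisation of) $U_Y(2,d,L)$ for a one-nodal hyperelliptic curve and then invoke Lemma~\ref{Cl}(2). However, you overstate the geometric input. You assert an \emph{isomorphism} $R\cong U_Y(2,1,L)$, but this is not what is known, and the paper explicitly leaves open whether $U_Y(n,d,L)$ is normal. What \cite[Theorem~1.2]{Bh2} actually provides (and what the paper quotes) is a morphism $f\colon R\to U_Y(2,d,L)$, $d$ odd, restricting to isomorphisms $R'\xrightarrow{\sim}U'_Y(2,d,L)$ and $R_0\xrightarrow{\sim}U_Y(2,d,L)\setminus U'_Y(2,d,L)$, exhibiting $R$ as a normalisation; the map $f$ is a bijection on closed points but need not be an isomorphism if the target fails to be normal. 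With this correction your argument goes through unchanged: once one has such an $f$ and knows $R$ is normal, $R=\tilde U_Y(2,d,L)$ and Lemma~\ref{Cl}(2) applies.

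Your direct verification that $R$ is normal (local complete intersection of the expected dimension in the Grassmannian, hence Cohen--Macaulay, together with the codimension-$3$ computation for $R_0$) is a genuine addition: the paper simply imports normality of $R$ from \cite{Bh2} as part of the statement that $R$ is a normalisation, whereas you give an intrinsic proof. Your dimension count for $R_0$ via the quotient by $\langle p_0\rangle$ is correct and pleasant. So the only fix needed is to replace ``isomorphism $R\cong U_Y(2,1,L)$'' by ``$R$ is a normalisation of $U_Y(2,d,L)$'' (citing \cite{Bh2}), after which your independent normality argument becomes a self-contained alternative to that citation rather than a redundant step.
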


\begin{proof} Let $X$ be an irreducible reduced projective hyperelliptic  curve of  arithmetic genus $g$ with a single ordinary node as its only singularity, the node is 
 a ramification point of $X$.  To such a curve one can associate a singular pencil of quadrics with Segre symbol $[2~1 \cdots 1]$ \cite{Bh2}. Then $R$ is a normalisation of $U_Y(2,d,L), d$ odd \cite[Theorem 1.2]{Bh2}. More precisely,  there is a morphism $f$ from the scheme $R$ onto $U_Y(2,d,L)$ such that the restrictions of $f$ to $R'$ and $R_0$  are isomorphisms onto $U'_Y(2,d,L)$ and $U_Y(2,d,L) - U'_Y(2,d,L)$.  Hence the corollary follows from Lemma \ref{Cl}.

We note that in particular, if the arithmetic genus $g= 2$, then  the intersection $R$ of quadrics in ${\mathbb P}^5$ is the normalisation of ${U}_Y(2,d,L)$. 

\end{proof}

\begin{remark}
Possibly $Cl(U_Y(n,d,L)) \ncong \ {\rm Pic} \ U_Y(n,d,L)$ as the following example (communicated by V. Srinivas) shows. Let $X$ be the variety obtained by identifying two distinct points $p_1$ and $p_2$ of $\mathbb{P}^2$. Then $Cl(X) \cong Cl(\mathbb{P} - p_1 -p_2) \cong \mathbb{Z}$. However, the Picard group of $X$ is an extension of $\mathbb{Z}$ by $G_m$ as a line bundle on $X$ is obtained identifying the fibres $L_{p_1}$ and $L_{p_2}$ of a line bundle $L$ on $\mathbb{P}^2$.   
\end{remark}

\section{Stability of the bundles $\mathcal{U}_{L,x}$ and $\mathcal{U}_{L}$}

        Since $n$ and $d$ are mutually coprime,  there exits a Poincar\'e sheaf 
         $$\mathcal{U}_L  \longrightarrow Y \times U_Y(n,d,L)\, ,$$ 
such that the restriction $\mathcal{U}_L\vert_{Y\times [E]} \cong E$ for any $E \in U_Y(n,d,L)$ \cite[Theorem 5.12']{N}.  Any two such sheaves differ by the pull back of a line bundle on $U_Y(n,d,L)$.  Since any torsionfree sheaf on $Y$ is locally free on the subset $Y'$ of nonsingular points of $Y$, the restriction of $\mathcal{U}_L$ to $Y' \times U_Y(n,d,L)$ is locally free. It follows that for a nonsingular  point $x$ of $Y$, the restriction $\mathcal{U}_{L,x}$ of the Poincare sheaf to $x \times U_Y(n,d,L)$,  is a vector bundle on $x \times U_Y(n,d,L) \cong U_Y(n,d,L)$. In this section, we study the stability of the restriction $\mathcal{U}_{L,x}$ (for $x$ 
a nonsingular  point of $Y$) with respect to the polarisation $\theta_L$ and the stability of $\mathcal{U}_L$ with respect to suitable polarisations.

\subsection{The morphism $\psi_{F,x}$} \label{morphism}  \hfill

        We fix a nonsingular point $x$ of $Y$ and a $(0,1)$-stable vector bundle $F$ of rank $n$ and determinant $L(x), d(L)=d$ on $Y$.  Let $k_x$ denote the torsion sheaf of length $1$ supported at $x$. Denote by $F_x$ the fibre of $F$ at $x$ and by ${\bf P} := {\bf P}(F_x^*)$ the projective space of lines in $F_x^*$. For every nonzero element $\phi \in {\bf P}(F_x^*)$, we have a nonzero homomorphism $\phi: F \to k_x$  giving an exact sequence 
\begin{equation} \label{hecke1}         
0 \longrightarrow E \longrightarrow F \longrightarrow k_x \longrightarrow 0\, .
\end{equation}

Since $x$ is a nonsingular point of $Y$ and $F$ is locally free, it follows that $E$ is a locally free sheaf of rank $n$ and determinant $L$. Hence  we have the following exact sequence on $Y \times {\bf P}(F_x^*)$ with $\mathcal{E}$ a vector bundle.

\begin{equation} \label{hecke2}         
0 \longrightarrow \mathcal{E} \longrightarrow p_1^*F \longrightarrow \mathcal{O}_{x \times {\bf P}} (1) \longrightarrow 0\, .
\end{equation}

 As in \cite[Lemma 5.5]{NR2}, one can see that $(0,1)$-stability of $F$ implies that $E \in U' _L(n, d)$. 
By the universal property of $U_Y(n,d,L)$, we have a morphism
$$\psi_{F,x} \colon {\bf P}(F_x^*) \longrightarrow U_Y(n,d,L)\, ,$$
such that, for some integer $j$, we have an isomorphism  

\begin{equation} \label{E}
\mathcal{E} \ \cong \ (id \times \psi_{F,x})^*\mathcal{U}_L \otimes p_2^*(\mathcal{O}_{\bf P} ( - j ))\, .
\end{equation}

\begin{lemma} \label{psi-iso}
$\psi_{F,x}$ is an isomorphism onto its image. 
\end{lemma}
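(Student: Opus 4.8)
The plan is to show that $\psi_{F,x}$ is injective on points and injective on tangent spaces, which together with the fact that the source is a projective variety (hence $\psi_{F,x}$ is proper) will give that it is a closed immersion onto its image.

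First I would address injectivity on points. Suppose $\phi, \phi' \in {\bf P}(F_x^*)$ give the same point $[E]$ of $U_Y(n,d,L)$, i.e. the two Hecke modifications $E \hookrightarrow F$ and $E' \hookrightarrow F$ yield isomorphic bundles $E \cong E'$. Both are realised as subsheaves of $F$ with quotient $k_x$; the composite $E \cong E' \hookrightarrow F$ together with the original inclusion $E \hookrightarrow F$ gives, after dualising the first and composing, a nonzero endomorphism-type situation. More precisely, I would consider the two inclusions $i, i' \colon E \to F$; since $\det E = \det E' = L$ and $\det F = L(x)$, and both cokernels are $k_x$, the sheaf $\mathrm{Hom}(E,F)$ is relevant. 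The key point is that $(0,1)$-stability of $F$ forces $E$ to be stable (indeed $E \in U'_L(n,d)$ as already noted), and one computes $\mu(E) = (d - \text{(contribution of the node term)})/n < \mu(F)$ appropriately, while the quotient $F/E \cong k_x$ has length $1$; a nonzero map $E \to F$ between stable bundles of the same rank with $\mu(E) < \mu(F)$ and the target only a length-one modification of the source must, up to scalar, be one of a one-dimensional family, and in fact the inclusion $E \hookrightarrow F$ is unique up to scalar because $\mathrm{Hom}(E,F)$ has dimension $1$ (this is the standard Hecke-curve argument: $h^0(F/E \otimes \text{something})$ is controlled). Hence $\phi$ and $\phi'$ define the same line in $F_x^*$, so the points of ${\bf P}(F_x^*)$ coincide. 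I expect this to be the main obstacle — pinning down exactly that $\mathrm{Hom}(E,F) = k$ using $(0,1)$-stability, in the nodal setting where one must be careful with torsionfree sheaves and the degree formula $d(F) = \chi(F) - r(F)\chi(\mathcal{O}_Y)$.

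Next, injectivity on tangent spaces. The tangent space to ${\bf P}(F_x^*)$ at $\phi$ is $\mathrm{Hom}(\ker\phi, F_x/\mathrm{im}\,\phi) = \mathrm{Hom}(\phi, k_x)/k\cdot\phi$ in the usual description, of dimension $n-1$, while the tangent space to $U_Y(n,d,L)$ at $[E]$ is $H^1(Y, \mathrm{End}_0(E))$ (traceless endomorphisms, since the determinant is fixed). The differential $d\psi_{F,x}$ is the connecting/Hecke map coming from the exact sequence $(\ref{hecke1})$: deforming the modification $\phi$ deforms $E$ inside $F$, and the induced first-order deformation of $E$ is classified by the image of the deformation of $\phi$ under the boundary map $\mathrm{Hom}(E, k_x) \to H^1(Y, \mathrm{End}(E))$ arising from $0 \to \mathrm{End}(E) \to \mathrm{Hom}(E,F) \to \mathrm{Hom}(E,k_x) \to 0$ (using $E \subset F$ with quotient $k_x$, tensored appropriately). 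Injectivity of $d\psi_{F,x}$ then amounts to showing this boundary map is injective on the relevant quotient, equivalently that $H^0(Y,\mathrm{Hom}(E,F)) \to \mathrm{Hom}(E,k_x)$ has image exactly $k\cdot\phi$, i.e. again $\mathrm{Hom}(E,F) = k$ — the same computation as before. So both injectivity statements reduce to the single $\mathrm{Hom}$-computation.

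Finally I would assemble: $\psi_{F,x}$ is a morphism from the projective variety ${\bf P}(F_x^*) \cong {\bf P}^{n-1}$, hence proper; it is injective on $k$-points and its differential is everywhere injective, so it is unramified and injective, hence a locally closed immersion, and by properness a closed immersion onto its image. Therefore $\psi_{F,x}$ is an isomorphism onto its image, as claimed. The one genuinely delicate point to get right in the write-up is the $\mathrm{Hom}(E,F)=k$ assertion and its provenance from $(0,1)$-stability via the argument of \cite[Lemma 5.5]{NR2}; everything else is formal.
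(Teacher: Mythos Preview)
Your proposal is correct and is precisely the content of the references the paper cites in lieu of a proof (\cite[Lemmas 5.6 and 5.9]{NR2} and \cite[Lemma 3]{Bi}): injectivity on points and on tangent spaces both reduce to $\dim \mathrm{Hom}(E,F)=1$, and properness of $\psi_{F,x}$ then yields a closed immersion. You have also correctly isolated the one nontrivial step, namely establishing $\mathrm{Hom}(E,F)=k$ from the $(0,1)$-stability of $F$ together with the stability of $E$; note only that this is Lemma~5.6 of \cite{NR2} rather than Lemma~5.5 (the latter gives stability of $E$, which you also use).
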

\begin{proof}
This can be proved exactly as \cite[Lemma 5.9]{NR2} (\cite[lemma 5.6]{NR2} and \cite[Lemma 3]{Bi} for injectivity). 
We note that $\psi_{F,x}$ maps into $U' _Y(n,d,L)$.
\end{proof}

      Let $\mathcal{E}_x = \mathcal{E}\vert_{x\times \bf{P}}$. There is an exact sequence 
\begin{equation} \label{Ex}
0 \longrightarrow \mathcal{O}_{\bf P} (1)  \longrightarrow \mathcal{E}_x \longrightarrow  \Omega^1_{\bf P}(1) \longrightarrow 0\, ,
\end{equation}
\cite[Lemma 3.1]{BBN}.

\begin{lemma} \label{L3.2} (\cite[Lemma 3.2]{BBN})
   Let $W \subset \mathcal{E}_x$ be a non-zero coherent subsheaf of $\mathcal{E}_x$ such that: \\
   (1) the quotient $\mathcal{E}_x/ W$ is torsionfree, and\\
   (2) $d(W)/r(W) \ge d(\mathcal{E}_x)/r(\mathcal{E}_x)$.\\
 Then $W$ contains the line subbundle $\mathcal{O}_{\bf P} (1)$ of $\mathcal{E}_x$.   
\end{lemma}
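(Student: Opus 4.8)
The plan is to combine the exact sequence \eqref{Ex}, an elementary degree count on $\mathbf{P}=\mathbf{P}(F_x^*)\cong\mathbb{P}^{n-1}$, and the (semi)stability of the twisted cotangent bundle $\Omega^1_{\mathbf{P}}(1)$. Taking degrees with respect to $\mathcal{O}_{\mathbf{P}}(1)$ in the Euler sequence $0\to\Omega^1_{\mathbf{P}}\to\mathcal{O}_{\mathbf{P}}(-1)^{\oplus n}\to\mathcal{O}_{\mathbf{P}}\to 0$ gives $c_1(\Omega^1_{\mathbf{P}}(1))=-c_1(\mathcal{O}_{\mathbf{P}}(1))$, so $d(\mathcal{O}_{\mathbf{P}}(1))=1$, $r(\mathcal{O}_{\mathbf{P}}(1))=1$, $d(\Omega^1_{\mathbf{P}}(1))=-1$, $r(\Omega^1_{\mathbf{P}}(1))=n-1$; hence \eqref{Ex} yields $d(\mathcal{E}_x)=0$ and $\mu(\mathcal{E}_x)=0$, so hypothesis (2) reads $d(W)\ge 0$. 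Since $\Omega^1_{\mathbb{P}^{n-1}}$, and therefore $\Omega^1_{\mathbf{P}}(1)$, is slope-(semi)stable with respect to $\mathcal{O}_{\mathbf{P}}(1)$ and $\mu(\Omega^1_{\mathbf{P}}(1))=-1/(n-1)<0$, every nonzero coherent subsheaf $W'\subseteq\Omega^1_{\mathbf{P}}(1)$ satisfies $d(W')<0$, i.e. $d(W')\le -1$ (for a full-rank subsheaf this is immediate since the quotient is torsion; otherwise $\mu(W')\le\mu(\Omega^1_{\mathbf{P}}(1))<0$ forces it).

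Next I would set $W'':=W\cap\mathcal{O}_{\mathbf{P}}(1)$, the kernel of the composite $W\hookrightarrow\mathcal{E}_x\twoheadrightarrow\Omega^1_{\mathbf{P}}(1)$, and let $W'$ be its image, so that $0\to W''\to W\to W'\to 0$ is exact. As a subsheaf of the line bundle $\mathcal{O}_{\mathbf{P}}(1)$, $W''$ is either $0$ or has rank $1$ with $W''\cong\mathcal{O}_{\mathbf{P}}(1)\otimes I_Z$ for a closed subscheme $Z$, so $d(W'')\le 1$ with equality precisely when $W''=\mathcal{O}_{\mathbf{P}}(1)$; the lemma is exactly the assertion that this equality holds. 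If $W''=0$, then $W=W'$ is a nonzero subsheaf of $\Omega^1_{\mathbf{P}}(1)$, whence $d(W)\le -1$, contradicting (2). If $W''\ne 0$ and $W'\ne 0$, then $d(W)=d(W'')+d(W')\le 1-1=0$ while $r(W)=1+r(W')\ge 2$, so (2) forces $\mu(W)=0$ and hence $d(W)=0$; combined with $d(W'')\le 1$ and $d(W')\le -1$, this is possible only if $d(W'')=1$, i.e. $W''=\mathcal{O}_{\mathbf{P}}(1)$.

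The remaining case is $W''\ne 0$, $W'=0$, i.e. $W=W''\subseteq\mathcal{O}_{\mathbf{P}}(1)$ of rank $1$; here (2) gives $0\le d(W)\le 1$, and the only possibility to exclude is $d(W)=0$, i.e. $W=\mathcal{O}_{\mathbf{P}}(1)\otimes I_Z$ with $Z\ne\emptyset$. This is where hypothesis (1) enters: the induced surjection $\mathcal{E}_x/W\to\mathcal{E}_x/\mathcal{O}_{\mathbf{P}}(1)=\Omega^1_{\mathbf{P}}(1)$ has kernel $\mathcal{O}_{\mathbf{P}}(1)/W$, which is a nonzero torsion subsheaf of $\mathcal{E}_x/W$, contradicting the torsionfreeness of $\mathcal{E}_x/W$; therefore $Z=\emptyset$ and $W=\mathcal{O}_{\mathbf{P}}(1)$. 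In every case $\mathcal{O}_{\mathbf{P}}(1)=W''\subseteq W$, proving the lemma. No step is genuinely hard; the two things one must not overlook are the (semi)stability of $\Omega^1_{\mathbf{P}}(1)$ (which, if preferred, can be verified directly from the Euler sequence by a cohomology vanishing) and the use of hypothesis (1) to eliminate the proper sub-line-bundle $\mathcal{O}_{\mathbf{P}}(1)\otimes I_Z\subsetneq\mathcal{O}_{\mathbf{P}}(1)$.
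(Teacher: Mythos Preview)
The paper itself does not supply a proof of this lemma; it simply quotes \cite[Lemma~3.2]{BBN}. Your argument is the natural one and is essentially correct, but there is one slip worth flagging. You assert that for a rank-one subsheaf $W''=\mathcal{O}_{\mathbf P}(1)\otimes I_Z\subseteq\mathcal{O}_{\mathbf P}(1)$ one has $d(W'')\le 1$ with equality \emph{precisely} when $W''=\mathcal{O}_{\mathbf P}(1)$. This fails when $\dim\mathbf P\ge 2$ (i.e.\ $n\ge 3$): if $Z$ is nonempty of codimension $\ge 2$ (for instance a single point), then $c_1(I_Z)=0$ and hence $d(W'')=1$ even though $W''\subsetneq\mathcal{O}_{\mathbf P}(1)$. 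Consequently, in your middle case ($W''\ne 0$, $W'\ne 0$) the step ``$d(W'')=1$, i.e.\ $W''=\mathcal{O}_{\mathbf P}(1)$'' is not justified by the degree count alone.

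The repair is precisely the observation you already invoke in your final case, and it works uniformly as soon as $W''\ne 0$: the composite $\mathcal{O}_{\mathbf P}(1)\hookrightarrow\mathcal{E}_x\twoheadrightarrow\mathcal{E}_x/W$ has kernel $\mathcal{O}_{\mathbf P}(1)\cap W=W''$, so the torsion sheaf $\mathcal{O}_{\mathbf P}(1)/W''$ injects into $\mathcal{E}_x/W$, and hypothesis~(1) forces $W''=\mathcal{O}_{\mathbf P}(1)$. With this in hand your case analysis collapses to just two cases: either $W''=0$, whence $W\hookrightarrow\Omega^1_{\mathbf P}(1)$ and semistability gives $d(W)\le -1$, contradicting~(2); or $W''\ne 0$, whence $\mathcal{O}_{\mathbf P}(1)=W''\subseteq W$ directly. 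The degree bookkeeping in the middle case is then unnecessary (though not wrong), and the proof is complete.
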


\subsection{Stability of $U_{L,x}$}  \hfill

       Let $H$ be an ample line bundle on a variety $Z$ of dimension $m \ge 2$ and $F$ a torsionfree sheaf on $Z$. Then the degree of $F$ with respect to $H$, denoted by $d(F)$, is the degree of the restriction of $F$ to a general complete intersection curve on $Z$ rationally equivalent to  $H^{m -1}$.  We recall that if the singular set of $Z$ has codimension at least $2$, then the general complete intersection curve can be chosen to lie in the set of nonsingular points of $Z$.
       
       If $Z$ is projective, then $d(F) = (c_1(F) . H^{m-1}) [Z]$ where  $[Z]$ denotes the  fundamental class of $Z$.

          Our aim in this section is to prove the following theorem.

\begin{theorem} \label{xsmooth}
Let $x\in Y$ be a nonsingular closed point.
\begin{enumerate}
\item The restriction ${\mathcal U}_{L,x}$ of the Poincare sheaf to $x \times U_Y(n,d,L)$ is stable with respect to the polarisation $\theta_L$.
\item The restriction of ${\mathcal U}_{L,x}$ to $x \times U'_Y(n,d,L)$ is stable with respect to any polarisation.
\end{enumerate}
\end{theorem}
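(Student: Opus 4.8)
The plan is to reduce the stability of $\mathcal{U}_{L,x}$ to the behaviour of its pullback along the family of rational curves $\psi_{F,x}\colon {\bf P}(F_x^*)\to U_Y(n,d,L)$ constructed in subsection \ref{morphism}. Suppose, for contradiction, that $\mathcal{U}_{L,x}$ is not stable with respect to $\theta_L$. Since $U_Y(n,d,L)$ is nonsingular in codimension $1$ and its class group is $\mathbb{Z}$ (Lemma \ref{Cl}(1)), a destabilising subsheaf would be detected already on the open subset $U'_Y(n,d,L)$; so it suffices to show that the restriction of $\mathcal{U}_{L,x}$ to $U'_Y(n,d,L)$ admits no subsheaf $\mathcal{W}$ of slope $\ge \mu(\mathcal{U}_{L,x})$ with torsionfree quotient (this simultaneously handles statement (2), since on $U'_Y(n,d,L)$ the Picard group is $\mathbb{Z}$ generated by $\theta_L$ and the notion of slope-stability is polarisation-independent there). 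First I would fix a $(0,1)$-stable $F$ of rank $n$ and determinant $L(x)$ — such $F$ exists and is generic by Proposition \ref{L2Bis} — and pull $\mathcal{W}$ back via $id\times\psi_{F,x}$. Using the isomorphism \eqref{E} and the exact sequence \eqref{Ex}, the restriction of $\mathcal{U}_{L,x}$ to the image of $\psi_{F,x}$ corresponds (up to the twist by $\mathcal{O}_{\bf P}(j)$) to $\mathcal{E}_x$, which fits in $0\to\mathcal{O}_{\bf P}(1)\to\mathcal{E}_x\to\Omega^1_{\bf P}(1)\to 0$.

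The key step is then to invoke Lemma \ref{L3.2}: a subsheaf $W\subset\mathcal{E}_x$ with torsionfree quotient and slope $\ge\mu(\mathcal{E}_x)$ must contain the line subbundle $\mathcal{O}_{\bf P}(1)$. I would argue that if $\mathcal{W}\subset\mathcal{U}_{L,x}|_{U'}$ is destabilising, then for a generic choice of $F$ the pullback $\psi_{F,x}^*\mathcal{W}$ is a nonzero subsheaf of $\psi_{F,x}^*(\mathcal{U}_{L,x}|_{U'})\cong\mathcal{E}_x\otimes\mathcal{O}_{\bf P}(j)$ whose slope (computed on the complete intersection curves defining degree, which can be taken inside the nonsingular locus) is $\ge$ that of $\mathcal{E}_x\otimes\mathcal{O}_{\bf P}(j)$; this needs that $\psi_{F,x}$ pulls $\theta_L$ back to a positive multiple of $\mathcal{O}_{\bf P}(1)$, so that slope inequalities are preserved, which follows from $\psi_{F,x}$ being a closed immersion (Lemma \ref{psi-iso}) and $\operatorname{Pic}{\bf P}\cong\mathbb{Z}$. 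Hence $\psi_{F,x}^*\mathcal{W}$ contains the canonical line subbundle $\mathcal{O}_{\bf P}(1)\subset\mathcal{E}_x$ for every $(0,1)$-stable $F$. The final step is to show that this forces $\mathcal{W}=\mathcal{U}_{L,x}|_{U'}$: the line subbundles $\mathcal{O}_{\bf P}(1)$ for varying $F$ correspond, under the Hecke description, to the lines spanned at each point $[E]\in U'_Y(n,d,L)$ by the various Hecke modifications $F\twoheadrightarrow k_x$ with kernel $E$; as $F$ ranges over all $(0,1)$-stable bundles with $F/E\cong k_x$, these lines sweep out the whole fibre $(\mathcal{U}_{L,x})_{[E]}=E_x$ (this is the infinitesimal/linear-algebra content: the evaluation-at-$x$ construction is surjective onto the fibre), so $\mathcal{W}$ has full rank and, having slope $\ge\mu$ with torsionfree quotient on a variety with $\operatorname{Pic}\cong\mathbb{Z}$, must be all of $\mathcal{U}_{L,x}|_{U'}$ — contradiction.

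The main obstacle I anticipate is the last step: turning "$\psi_{F,x}^*\mathcal{W}\supset\mathcal{O}_{\bf P}(1)$ for all $(0,1)$-stable $F$" into "$\mathcal{W}$ has generic rank $n$." One must check that the rational curves $\operatorname{Im}\psi_{F,x}$ cover a dense subset of $U'_Y(n,d,L)$ and, more delicately, that the canonical line subbundles they carry are not all contained in a proper subsheaf — i.e. that at a general point $[E]$ the lines $\{k_\phi\subset E_x : \phi\in{\bf P}(F_x^*),\ \ker\phi\cong E\}$ span $E_x$ as $F$ varies. This is exactly the kind of genericity statement proved in \cite{BBN} in the smooth case and in \cite{NR2}; in the nodal setting one has to ensure the relevant $F$'s are genuinely vector bundles (not just torsionfree sheaves) so that sequence \eqref{hecke1} keeps $E$ locally free, which is why $x$ is taken nonsingular and why Proposition \ref{L2Bis} is invoked. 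Once covering and spanning are in place, the rank count plus $\operatorname{Pic}\cong\mathbb{Z}$ closes the argument, and the degree bookkeeping (the twist $\mathcal{O}_{\bf P}(-j)$, the relation $d(\mathcal{E}_x)=d(\Omega^1_{\bf P}(1))+d(\mathcal{O}_{\bf P}(1))$) is routine.
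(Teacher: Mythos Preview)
Your overall strategy---pull back to the Hecke line ${\bf P}(F_x^*)$ via $\psi_{F,x}$, use the identification \eqref{E} and the sequence \eqref{Ex}, and invoke Lemma \ref{L3.2}---is exactly the paper's approach, and the reduction to $U'_Y(n,d,L)$ via ${\rm Pic}\cong\mathbb{Z}$ is also the same. The difference is in how Lemma \ref{L3.2} is deployed, and your version makes the final step harder than it needs to be.

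You apply Lemma \ref{L3.2} in the forward direction: assuming $\mu(\mathcal{W})\ge\mu(\mathcal{U}_{L,x})$, you conclude $\mathcal{O}_{\bf P}(1)\subset\psi_{F,x}^*\mathcal{W}$ for generic $F$, and then you try to vary $F$ so that these canonical lines sweep out the whole fibre $E_x$, forcing $r(\mathcal{W})=n$. That spanning step is the ``main obstacle'' you yourself flag, and while it can be made to work, it requires controlling genericity of $F$ simultaneously over \emph{all} points $[E]$ and handling the torsion in $\psi_{F,x}^*(\mathcal{U}_{L,x}/\mathcal{W})$ for each such $F$.

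The paper avoids this entirely by using the contrapositive. Start with any proper subsheaf $\mathcal{U}'\subset\mathcal{U}_{L,x}$ of rank $r<n$ (with torsionfree quotient). Since $r<n$, for a generic pair $(E,\ell)\in H_x$ the line $\ell\subset E_x$ is \emph{not} contained in the fibre $\mathcal{U}'_E$. Choose \emph{one} such generic $(E,\ell)$ and let $F$ be the associated $(0,1)$-stable bundle. The fibre of $\mathcal{O}_{\bf P}(1)\subset\mathcal{E}_x$ at the point of ${\bf P}(F_x^*)$ lying over $[E]$ is precisely $\ell$, so $\psi_{F,x}^*\mathcal{U}'(-j)$ does \emph{not} contain $\mathcal{O}_{\bf P}(1)$. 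Lemma \ref{L3.2} then gives $\mu(\psi_{F,x}^*\mathcal{U}'(-j))<\mu(\mathcal{E}_x)$ directly, and the transfer back via $\psi_{F,x}^*\theta_L=\mathcal{O}_{\bf P}(\delta)$, $\delta>0$, yields $\mu(\mathcal{U}')<\mu(\mathcal{U}_{L,x})$. One generic $F$ suffices; no spanning is needed.
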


        We recall some constructions and results from \cite{Bh3}.
        
\subsection{}       Each point of  the projective bundle 
       $$P_x \ := {\bf P} (\mathcal{U}_{L,x})$$
corresponds to a pair $(E, \ell)$ where $E \in U_Y(n,d,L)$ and $\ell \in {\bf P}(E_x) \cong {\bf P}({\rm Ext}^1(k_x, E))$ and hence determines an exact sequence of type \eqref{hecke1} and thus a torsionfree sheaf $F$. 
Let $H_x$ be the open subset of $P_x$ defined by 
$$H_x := \{(E,\ell) \in P_x \ \vert \ F\in U_Y(n,d+1,L(x)) \ {\rm is} \ (0,1)-{\rm stable} \}\, .$$         
We have maps
$$
\begin{array}{ccccc}
             H_x& \stackrel{q}{\longrightarrow} & V & \subset & U_Y(n,d+1,L(x))\\
              {}    &               {}         & {} &    {}       &    {}\\
   \downarrow p &             {}        & {} &    {}       &    {}\\
               {}    &               {}         & {} &    {}       &    {}\\
               U_Y(n,d,L)
\end{array}
$$

Here $p$ is the projection $p : H_x \to U_Y(n,d,L)$ defined by $p(E,\ell) = E$.
We have a morphism $q: H_x \rightarrow U_Y(n,d+1,L(x))$ defined by $(E,\ell) \mapsto F$ with image the nonempty open subset $V \subset U_Y(n,d+1,L(x))$ of $(0,1)$-stable vector bundles (Proposition \ref{L2Bis}). 
The fibre of $q$ over $F\in V$ is ${\bf P}(F_x^*)$. The restriction of the projection map $p$ to the fibre ${\bf P}(F_x^*)$  is precisely $\psi_{F,x}$, hence the fibre $P(F_x^*)$ maps isomorphically onto its image $P(F,x) := \psi_{F,x}(P(F_x^*))$  (Lemma \ref{psi-iso}).           
              
\begin{lemma}  \label{l4}
    Let $\mathcal{U}' \subset  \mathcal{U}_{L,x}$ be a subsheaf of rank $r$ with $0 < r < r( \mathcal{U}_{L,x})$. Let $x_1, \cdots, x_p \in Y$ be nonsingular points.
\begin{enumerate}
\item The singular set $S$ of $\mathcal{U}'$ has codimension at least $2$ in $U_Y(n,d,L)$.
\item There is a nonempty open set $U \subset U_Y(n,d,L)$ such that for  $E\in U$, \\
(a) $\mathcal{U}'$ is locally free at $E$,\\
(b) the  homomorphism of fibres $\mathcal{U'}_E \to ( \mathcal{U}_{L,x})_E$ is injective,\\
(c) for all $x_i$ and for the generic line $\ell$ in $E_{x_i}$, the vector bundle $F$ associated to $(E, \ell)$ is $(0,1)$-stable and $\mathcal{U}'$ is locally free at every point of $P(F,x_i)$ outside a subvariety of codimension at least $2$.
\end{enumerate}     
\end{lemma}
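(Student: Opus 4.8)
The plan is to prove the three assertions in order, using the fact that $U'_Y(n,d,L)$ is nonsingular and its complement in $U_Y(n,d,L)$ has codimension at least $2$ (since $U_Y(n,d,L)$ is nonsingular in codimension $1$), together with the structure of the maps $p$ and $q$ set up above. For part (1), I would argue that a coherent subsheaf $\mathcal{U}' \subset \mathcal{U}_{L,x}$ of a fixed rank $r$ is locally free outside a closed subset of codimension at least $2$: this is a standard fact once we know the ambient space is nice enough in codimension $1$. Concretely, on the smooth locus $x\times U'_Y(n,d,L)$ the sheaf $\mathcal{U}_{L,x}$ is a vector bundle and any subsheaf agrees with its saturation outside codimension $2$, and a saturated subsheaf of a locally free sheaf on a smooth variety is reflexive, hence locally free in codimension $1$ of that smooth locus; combined with the fact that $U_Y(n,d,L)\setminus U'_Y(n,d,L)$ already has codimension $\ge 2$, the singular set $S$ of $\mathcal{U}'$ meets $U_Y(n,d,L)$ in codimension $\ge 2$. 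This gives (1).

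For part (2), I would take $U$ to be the intersection of finitely many nonempty open sets, one for each condition. Condition (a) holds on $U_Y(n,d,L)\setminus S$, which is open and dense by part (1). Condition (b): the locus where the fibre map $\mathcal{U}'_E \to (\mathcal{U}_{L,x})_E$ drops rank is closed, and it is proper because on the generic point $\mathcal{U}'$ has rank $r$ and injects into $\mathcal{U}_{L,x}$; intersecting with the locus from (a) ensures $\mathcal{U}'$ is locally free there, so the fibre map is defined and its injectivity is an open condition. Condition (c) is the one requiring the maps $p,q$: the locus $V \subset U_Y(n,d+1,L(x))$ of $(0,1)$-stable bundles is nonempty open by Proposition \ref{L2Bis}, and for each nonsingular point $x_i$ the Hecke correspondence $H_{x_i} \to U_Y(n,d,L)$ is dominant (indeed, for generic $E$ a generic line $\ell\in E_{x_i}$ yields a $(0,1)$-stable $F$); then $P(F,x_i)=\psi_{F,x_i}(\mathbf{P}(F_{x_i}^*))$ is a subvariety of $U'_Y(n,d,L)$, and since $\psi_{F,x_i}$ is an isomorphism onto its image (Lemma \ref{psi-iso}), the pullback of $\mathcal{U}'$ to $\mathbf{P}(F_{x_i}^*)$ is a subsheaf of a vector bundle on projective space, hence locally free outside codimension $\ge 2$ there; pushing this forward, $\mathcal{U}'$ fails to be locally free on $P(F,x_i)$ only along a subvariety of codimension $\ge 2$. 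Taking $F$ in a suitable open subset of $V$ for all $i$ simultaneously, and intersecting back with $U$, gives (c).

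The main obstacle I anticipate is making condition (c) uniform over the finitely many points $x_1,\dots,x_p$ and over a single open set $U$ in $U_Y(n,d,L)$ rather than over the Hecke spaces: one wants a single open $U\subset U_Y(n,d,L)$ such that every $E\in U$ simultaneously sits over a $(0,1)$-stable $F$ for each $x_i$ and has the good local-freeness behaviour on each $P(F,x_i)$. This requires carefully tracking which conditions are open on $H_{x_i}$, which descend to open conditions on $U_Y(n,d,L)$ (using dominance and properness of the relevant loci), and then intersecting; the codimension-$\ge 2$ bookkeeping for the bad locus inside each $P(F,x_i)$, and verifying it does not spread out to codimension $1$ after varying $F$, is the delicate point. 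Everything else is a routine combination of openness of (semi)stability, the isomorphism $\psi_{F,x_i}$, and the codimension estimates already available from the local factoriality discussion.
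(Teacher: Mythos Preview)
The paper itself does not give an independent proof of this lemma; it simply says ``This can be proved as in \cite[Lemma 4.2]{Bh3} using \cite[Proposition 2.1]{Bh3}.'' So the comparison has to be made against the implicit argument one reconstructs from \cite{Bh3} and from how the lemma is actually used in the proof of Theorem~\ref{xsmooth}. Your treatment of part~(1) and of (2)(a),(b) is along the expected lines and is fine (for (1) it is in fact enough to note that a subsheaf of a locally free sheaf is torsion-free, hence on the smooth locus $U'_Y(n,d,L)$ its non-locally-free set already has codimension $\ge 2$; the saturation/reflexive detour is unnecessary but harmless).

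The substantive issue is in your argument for (2)(c). You write that ``the pullback of $\mathcal{U}'$ to $\mathbf{P}(F_{x_i}^*)$ is a subsheaf of a vector bundle on projective space, hence locally free outside codimension~$\ge 2$ there; pushing this forward, $\mathcal{U}'$ fails to be locally free on $P(F,x_i)$ only along a subvariety of codimension~$\ge 2$.'' This conflates two different statements: local freeness of the \emph{restriction} $\psi_{F,x_i}^*\mathcal{U}'$ on $\mathbf{P}(F_{x_i}^*)$, and local freeness of the sheaf $\mathcal{U}'$ on the ambient moduli space at points of $P(F,x_i)$. The lemma (and its use in Theorem~\ref{xsmooth}) requires the latter, namely that $S\cap P(F,x_i)$ has codimension $\ge 2$ in $P(F,x_i)$, where $S$ is the singular set of $\mathcal{U}'$ in $U_Y(n,d,L)$. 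That a pullback along a closed immersion is locally free says nothing about the stalk of the original sheaf at that point, so your ``pushing forward'' step does not go through.

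The correct mechanism is the dimension count through the Hecke correspondence that you set up but then abandon. Since $S$ has codimension $\ge 2$ in $U_Y(n,d,L)$ by part~(1), and the fibres of $p\colon H_{x_i}\to U_Y(n,d,L)$ are $(n-1)$-dimensional, $p^{-1}(S)$ has codimension $\ge 2$ in $H_{x_i}$. The map $q\colon H_{x_i}\to V$ has equidimensional fibres $\mathbf{P}(F_{x_i}^*)$ of dimension $n-1$, so for a general $F\in V$ one has $\dim\bigl(q^{-1}(F)\cap p^{-1}(S)\bigr)\le n-3$. Since $p$ restricted to $q^{-1}(F)$ is precisely the isomorphism $\psi_{F,x_i}$ onto $P(F,x_i)$ (Lemma~\ref{psi-iso}), this says exactly that $S\cap P(F,x_i)$ has codimension $\ge 2$ in $P(F,x_i)$. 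This is also how the paper argues in the proof of Theorem~\ref{xsmooth}; once you replace the faulty ``restriction is locally free'' step by this dimension count, the rest of your outline is correct, and the anticipated ``obstacle'' about uniformity over $x_1,\dots,x_p$ is handled by intersecting the finitely many open subsets of $V$ (one for each $x_i$) and pulling back via the dominant maps $p$.
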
   
\begin{proof}
   This can be proved as in Lemma \cite[Lemma 4.2]{Bh3} using \cite[Proposition 2.1]{Bh3}. 
\end{proof}

 {\bf Proof of Theorem \ref{xsmooth}}
 \begin{proof}
(1)     Let $\mathcal{U}' \subset  \mathcal{U}_{L,x}$ be a torsionfree subsheaf of rank $r$ with $0 < r < r(\mathcal{U}_{L,x})$ and with a torsionfree quotient. Since $U_Y(n,d,L) -  U'_Y(n,d,L)$ is of codimension at least $3$ by \cite[Proposition 2.1]{Bh3} (or \cite[Theorem 1.3]{BhS} for codimension at least $2$) and $U'_Y(n,d,L)$ is nonsingular, there is an  open subset $Z' \subset U'_Y(n,d,L)$, with $S:= U_Y(n,d,L) - Z'$ of codimension at least $2$ in $U_Y(n,d,L)$, such that $\mathcal{U}'$ is locally free on $Z'$ and $\mathcal{U}'_E \to (\mathcal{U}_{L,x})_E$  is injection for all $E \in Z'$ (Lemma \ref{l4}).  Then $p^{-1} Z' \subset H_x$ is a Zariski open subset with codim $H_x - p^{-1}Z' \ge 2$ so that dim $S \le $ dim $U_Y(n,d,L) + n - 3 = $ dim $U_Y(n,d+1,L(x)) + n - 3$. The image $V$ of $q$ has dimension equal to the dimension of $U_Y(n,d+1,L(x))$, hence the general fibre of $q$ intersects $S$ in a closed subset of dimension at most $n-3$. Therefore for a general $(0,1)$-stable vector bundle $F \in U'_{L(x)}(n,d+1)$, the complement of $\psi_{F,x}^{-1}(Z')$ has codimension at least $2$. By generality of $F$, we can assume that $F$ is defined by a pair $(E,\ell)$ and $\ell$ is not in the fibre $P(\mathcal{U}')_E$ i.e. the line determined by $\ell$ is not contained in the fibre of $\mathcal{U}'$ at $E$ (as in \cite[Proof of theorem 3.6]{BBN}).  
     
    For a sheaf $N$ on $\bf{P}$, define $N(-j):= N\otimes {\mathcal O}_{\bf P}(- j )$.  By equation \eqref{E}, one has 
     $$\psi_{F,x}^* (\mathcal{U}_{L,x}) (- j ) \cong \mathcal{E}_x\, ,$$
  where  $\mathcal{E}_x$ is as in \eqref{Ex}. 
   Hence 
  $$\psi_{F,x}^*(\mathcal{U}')(- j) \vert_{\psi_{F,x}^{-1}Z'} \subset \mathcal{E}'_x = \mathcal{E}_x \vert _{\psi_{F,x}^{-1}Z'}\, .$$ 
  The condition that the line determined by $\ell$ is not contained in the fibre of $\mathcal{U}'$ at $E$  implies that $\psi_{F,x}^*(\mathcal{U}')(- j) \vert_ {\psi_{F,x}^{-1}Z'}$ does not contain the line subbundle $\mathcal{O}_P(1)\vert_{\psi_{F,x}^{-1}Z'}$ of $\mathcal{E}'_x$. For a general $F$, the complement of $\psi_{F,x}^{-1}(Z')$ in $\bf{P}$ has codimension at least $2$, hence  $\psi_{F,x}^*(\mathcal{U}')( - j)\vert_{\psi_{F,x}^{-1}Z'}$ can be extended to $\bf{P}$. By Lemma \ref{L3.2} (applied to an extension of $\psi_{F,x}^*(\mathcal{U}')( - j)\vert_{\psi_{F,x}^{-1}Z'}$ to $\bf{P}$), we have 
  $$\mu(\psi_{F,x}^*(\mathcal{U}')( - j)) < \mu(\mathcal{E}_x) = \mu(\psi_{F,x}^*(\mathcal{U}_{L,x})(- j ))$$ 
  so that 
  \begin{equation}\label{mu1}
  \mu(\psi_{F,x}^*(\mathcal{U}')) <  \mu(\psi_{F,x}^*(\mathcal{U}_{L,x}))\, .
  \end{equation}
  One has $c_1( \mathcal{U}'\vert_{Z'}) = c_1(\mathcal{U}'\vert_{U'_Y(n,d,L)}) = \lambda_{\mathcal{U}'} c_1(\theta_L\vert_{U'_Y(n,d,L)})$ for some scalar $\lambda_{\mathcal{U}'}$. Since $d(\mathcal{U}') = d(\mathcal{U}'\vert_{U'_Y(n,d,L)})$, this implies that $d(\mathcal{U}') = \lambda_{\mathcal{U}'} d(\theta_L)$, where $d(\theta_L) = \theta_L^{(n^2-1)(g-1)} [U_Y(n,d,L)].$   Similarly, $d(\mathcal{U}_{L,x}) = \lambda_{\mathcal{U}_{L,x}} d(\theta_L)$. We have 
  $d(\psi_{F,x}^*(\mathcal{U}')) = \lambda_{\mathcal{U}'} d(\psi_{F,x}^*\theta_L).$ 
  Since $\theta_L$ is an ample line bundle, by Lemma \ref{psi-iso}, $\psi_{F,x}^*(\theta_L)$ is an ample line bundle on $\bf{P}$. As Pic ${\bf P}= \mathbb{Z}$ is generated by $\mathcal{O}_{\bf{P}}(1)$, we have $\psi_{F,x}^*(\theta_L) = \mathcal{O}_{\bf{P}}(\delta)$ for some $\delta > 0$.  Hence one gets $d(\psi_{F,x}^*(\mathcal{U}')) =  \lambda_{\mathcal{U}'} \delta$. Similarly, 
   $d(\psi_{F,x}^*(\mathcal{U}_{L,x})) =  \lambda_{\mathcal{U}_{L,x}} \delta$. Thus the equation \eqref{mu1} is equivalent to 
$$\frac{\lambda_{\mathcal{U}'}}{r(\mathcal{U}')} < \frac{\lambda_{\mathcal{U}_{L,x}}}{r(\mathcal{U}_{L,x})}\, .$$
 Hence 
$$\frac{\lambda_{\mathcal{U}'}d(\theta_L)}{r(\mathcal{U}')} < \frac{\lambda_{\mathcal{U}_{L,x}}d(\theta_L)}{r(\mathcal{U}_{L,x})}\, .$$
 Thus $\mu(\mathcal{U}') < \mu(\mathcal{U}_{L,x})$, proving the stability of $\mathcal{U}_{L,x}$.\\
(2)    Note that $\psi_{F,x} ({\bf P}) \subset U'_Y(n,d,L)$. Hence Part (2) can be proved similarly as Part (1) using the facts that the singular set $U_Y(n,d,L) - U'_Y(n,d,L)$ is of codimension at least $2$ in $U_Y(n,d,L)$ and Pic $U'_Y(n,d,L) \cong \mathbb{Z}$, with generator $\theta_L$.
     
\end{proof}

\subsection{Stability of the Poincar\'e bundle on $Y \times U_Y(n,d,L)$} \hfill

         The moduli space $U_Y(n,d,L)$ is unirational \cite[Lemma 3.5(1)]{BhS}. Hence there is no non-constant map from $U_Y(n,d,L)$ to Pic $Y$. Using the see-saw theorem \cite[Corollary 6, p. 54]{Mu}, one sees that every line bundle on $Y \times U_Y(n,d,L)$ is the tensor product of the pull back of a line bundle on $U_Y(n,d,L)$ and the pull back of a line bundle on $Y$. Hence we take the polarisation on $Y\times U_Y(n,d,L)$ represented by a divisor of the form $a \alpha + b \theta_L$, where $\alpha$ is a fixed ample Cartier divisor on $Y$ and $a, b$ are positive integers.  
         
\begin{theorem} \label{thmpoincare}
    The Poincar\'e bundle $\mathcal{U}_{L}$ on $Y \times U_Y(n,d,L)$ is stable with respect to the polarisation  $a \alpha + b \theta_L$ with $a, b$ positive integers.
\end{theorem}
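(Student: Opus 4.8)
The plan is to imitate the slicing argument in the proof of Theorem~\ref{xsmooth}, but now cutting $Y\times U_Y(n,d,L)$ in two directions: by the fibres $Y\times\{[E]\}$ of the second projection and by the fibres $\{x\}\times U_Y(n,d,L)$ of the first projection. Write $M=U_Y(n,d,L)$, $m=\dim M=(n^2-1)(g-1)$, let $p_1,p_2$ be the projections of $Y\times M$ onto its factors, put $H=a\,p_1^{*}\alpha+b\,p_2^{*}\theta_L$ and $N_M=d(\theta_L)=\theta_L^{\,m}[M]>0$. For slope stability it is enough to show $\mu_H(\mathcal V)<\mu_H(\mathcal U_L)$ for every subsheaf $\mathcal V\subset\mathcal U_L$ of rank $r$ with $0<r<n$ and torsionfree quotient; since the slope of a subsheaf does not exceed that of its saturation, we may assume $\mathcal V$ saturated.

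The first step is to make $d_H$ explicit. As $\alpha$ is a divisor on the curve $Y$ one has $(p_1^{*}\alpha)^{2}=0$, hence
\[
H^{m}=b^{m}(p_2^{*}\theta_L)^{m}+m\,a\,b^{m-1}(p_1^{*}\alpha)\cdot(p_2^{*}\theta_L)^{m-1}.
\]
Capping with $c_1(\mathcal V)$, using that $(p_2^{*}\theta_L)^{m}[Y\times M]=N_M\,[Y\times\{[E]\}]$ and that $(p_1^{*}\alpha)\cdot(p_2^{*}\theta_L)^{m-1}$ is represented by $(\deg\alpha)$ times $\{x\}\times(\text{a complete intersection curve in }M)$, and recalling that the restriction of $c_1(\mathcal V)$ to a generic such fibre computes the corresponding ``partial degree'' of $\mathcal V$, one obtains
\[
d_H(\mathcal V)=b^{m-1}\bigl(b\,N_M\,d(\mathcal V|_{Y\times[E]})+m\,a\,(\deg\alpha)\,d_{\theta_L}(\mathcal V|_{\{x\}\times M})\bigr)
\]
for a generic $[E]\in M$ and a generic nonsingular $x\in Y$, and the same identity for $\mathcal U_L$ with $d(\mathcal U_L|_{Y\times[E]})=d(E)=d$ and $d_{\theta_L}(\mathcal U_L|_{\{x\}\times M})=d(\mathcal U_{L,x})$. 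Dividing by the positive common factor $b^{m-1}$, the inequality $n\,d_H(\mathcal V)<r\,d_H(\mathcal U_L)$ that we must prove is therefore equivalent to
\[
b\,N_M\bigl(\mu(\mathcal V|_{Y\times[E]})-\mu(E)\bigr)+m\,a\,(\deg\alpha)\bigl(\mu_{\theta_L}(\mathcal V|_{\{x\}\times M})-\mu_{\theta_L}(\mathcal U_{L,x})\bigr)<0 ,
\]
and, the coefficients $b\,N_M$ and $m\,a\,(\deg\alpha)$ being positive, it suffices to prove that each of the two bracketed differences is strictly negative.

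For the second bracket, restrict $\mathcal V$ to $\{x\}\times M$ for a generic nonsingular point $x\in Y$: by generic flatness of $\mathcal V$ and of $\mathcal U_L/\mathcal V$ over $Y$, this restriction is a subsheaf of the vector bundle $\mathcal U_{L,x}$ of the same rank $r$, with $0<r<n$; passing to its saturation inside $\mathcal U_{L,x}$ does not decrease the slope, and Theorem~\ref{xsmooth}(1) then gives $\mu_{\theta_L}(\mathcal V|_{\{x\}\times M})<\mu_{\theta_L}(\mathcal U_{L,x})$. For the first bracket, restrict $\mathcal V$ to $Y\times\{[E]\}$ for a generic $[E]\in M$: by generic flatness over $M$, this restriction is a subsheaf of $E$ of the same rank $r$, and since $[E]$ is generic we may take $E$ stable, so stability of $E$ gives $\mu(\mathcal V|_{Y\times[E]})<\mu(E)$. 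Substituting into the displayed inequality yields $\mu_H(\mathcal V)<\mu_H(\mathcal U_L)$, which completes the argument.

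The formal part — the expansion of $H^{m}$ and the reduction to the two slope inequalities — is short once one knows $(p_1^{*}\alpha)^{2}=0$ and the splitting of $\mathrm{Pic}(Y\times M)$ recalled just before the statement. The point that needs care, and where I expect the real work to lie, is the legitimacy of the two slice arguments: that for a generic point of $M$ (respectively of $Y$) the restriction of a saturated $\mathcal V\subset\mathcal U_L$ is again a subsheaf, of the same rank, of the corresponding restriction of $\mathcal U_L$, and that the restriction of $c_1(\mathcal V)$ to such a generic fibre genuinely computes the moduli degree of the restricted sheaf — the latter being mildly delicate over the nodes of $Y$, where $\mathcal V$ and its restrictions need not be locally free, so that the Grothendieck--Riemann--Roch form of $c_1$ must be used. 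This is a generic flatness plus codimension bookkeeping of exactly the type already carried out in Lemma~\ref{l4} and in \cite{Bh3}, and no new global input is required beyond Theorem~\ref{xsmooth} and the stability of the vector bundles parametrised by $M$.
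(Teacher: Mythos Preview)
Your proposal is correct and follows essentially the same route as the paper: expand $(a\alpha+b\theta_L)^{m}$ using $(p_1^{*}\alpha)^{2}=0$, split the degree into a $Y$-fibre contribution and an $M$-fibre contribution, and conclude from the stability of $E$ and of $\mathcal U_{L,x}$ (Theorem~\ref{xsmooth}) respectively. The only cosmetic difference is that the paper phrases the argument in terms of a torsionfree quotient $Q$ rather than a saturated subsheaf $\mathcal V$, and accordingly handles the possible torsion of $Q|_{Y\times[E]}$ at the nodes by passing to $Q_Y/\mathrm{tor}$---exactly the ``codimension bookkeeping'' you flag in your last paragraph.
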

\begin{proof}
    Let $Q$ be a torsionfree quotient sheaf of $\mathcal{U}_{L}$. It suffices to check that 
    $$ r(\mathcal{U}_{L}) d(Q) - r(Q) d(\mathcal{U}_{L}) \ > \ 0\, .$$ 
    
    Let $E \in U'_Y(n,d,L)$ be a general element. Let $Q_Y = Q\vert_{Y\times E}$. Then $Q_Y$ is a quotient of the locally free sheaf $E$. The quotient $Q_Y$ may not be torsionfree, let $Q'_Y := Q_Y/tor$ be the quotient of $Q_Y$ by its torsion subsheaf. Let $Y'$ be the set of smooth points of $Y$. 
    
    Since $Y' \times U'_Y(n,d,L)$ is smooth, there is an open subset $Z' \subset Y' \times U'_Y(n,d,L)$ whose complement in $Y' \times U'_Y(n,d,L)$ is of codimension at least $2$ such that $Q$ is locally free over $Z'$. For a general $E \in U'_Y(n,d,L)$, $Z'$ contains  the curve $Y' \times E$. This implies that $Q_Y$ has torsion at most at nodes of $Y$ and $r(Q)= r(Q_Y)= r(Q'), d(Q_Y) \ge d(Q'_Y)$ on $Y$. Since $E$ is stable,  one has $r(E) d(Q'_Y) - r(Q'_Y) d(E) > 0$. This implies that 
\begin{equation}\label{Y1}    
    r( \mathcal{U}_{L}) d(Q_Y) - r(Q) d(E) > 0\, . 
\end{equation}

         Since the complement of $Z'$ in $Y' \times U'_Y(n,d,L)$ is of codimension at least $2$, for a general $x \in  Y'$, $Z' \cap (x \times U'_Y(n,d,L))$ has complement of  codimension at least $2$ $x \times U'_Y(n,d,L)$. Since $U_Y(n,d,L) - U'_Y(n,d,L)$ has codimension at least $3$ (\cite[Proposition 2.1]{Bh3}), it follows that the open subset $Z' \cap (x \times U'_Y(n,d,L))$ has complement of  codimension at least $2$ in $x \times U_Y(n,d,L)$. Since $x$ is a non-singular point, the sheaf $Q_x := Q\vert_{x \times U_Y(n,d,L)}$ is locally free on $Z' \cap (x \times U_Y(n,d,L))$ and $d(Q_x) = d(Q_x\vert_{Z' \cap (x \times U'_Y(n,d,L))})$.  For $x$ a non-singular point, $\mathcal{U}_{L,x} =  \mathcal{U}\vert_{x \times U_Y(n,d,L)}$ is a stable vector bundle by Theorem \ref{xsmooth}.  Hence,
$r(\mathcal{U}_{L,x}) d(Q_x) - r(Q_x) d(\mathcal{U}_{L,x}) >0$, i.e.
\begin{equation}\label{x1}    
    r(\mathcal{U}_{L}) d(Q_x) - r(Q) d(\mathcal{U}_{L,x}) > 0\, . 
\end{equation}
Let $m:= \ {\rm dim} \ U_Y(n,d,L)$.
One has 
$$
\begin{array}{ccl}
d(\mathcal{U}_L) & = &  (c_1(\mathcal{U}_L).(b \theta_L + a \alpha)^m) [Y \times U_Y(n,d,L)]\\
                   {} & = & c_1(\mathcal{U}_L). (m a b^{m-1} \alpha \theta_L^{m-1} + b^m \theta_L^m) [Y \times U_Y(n,d,L)]\\
                         {} & = & m a b^{m-1} \alpha [Y] d(\mathcal{U}_{L,x}) + b^m d(E) \theta_L^m [U_Y(n,d,L)] \\         
\end{array}
$$
Similarly, 
$$d(Q) = m a b^{m-1} \alpha [Y] d(\mathcal{Q}_x) + b^m d(Q_Y) \theta_L^m [U_Y(n,d,L)] \, .$$
Then
$$
\begin{array}{ccl}
r(\mathcal{U}_L) d(Q) - r(Q) d(\mathcal{U}_L) & = & m a b^{m-1} \alpha [Y] (r(\mathcal{U}_L) d(\mathcal{Q}_x) 
- r(Q) d(\mathcal{U}_{L,x}))\\
 {} & {} & + b^m  \theta_L^m [U_Y(n,d,L)] ( r(\mathcal{U}_L) d(Q_Y) - r(Q) d(E) )\\
 {} & > & 0\, , 
\end{array}
$$
by \eqref{Y1} and \eqref{x1}.

This completes the proof of the theorem.
\end{proof}

\begin{corollary}
The Poincar\'e bundle $\mathcal{U}'_{L}$ on $Y \times U'_Y(n,d,L)$ is stable with respect to any polarisation.  
\end{corollary}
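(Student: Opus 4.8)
The plan is to deduce the corollary from Theorem \ref{thmpoincare} by a codimension argument on the Picard/class group, exactly in the spirit of Lemma \ref{Cl} and the degree-computation remarks in the Preliminaries. First I would observe that $U'_Y(n,d,L)$ is an open dense subvariety of $U_Y(n,d,L)$ whose complement has codimension at least $3$ by \cite[Proposition 2.1]{Bh3}, so that $Y \times U'_Y(n,d,L)$ is open in $Y \times U_Y(n,d,L)$ with complement of codimension at least $3$ as well. In particular the restriction map on divisor class groups $\mathrm{Cl}(Y \times U_Y(n,d,L)) \to \mathrm{Cl}(Y \times U'_Y(n,d,L))$ is an isomorphism (both spaces are nonsingular in codimension one, using \cite[Proposition 6.5, p.133]{Ha}), and since $Y' \times U'_Y(n,d,L)$ is smooth, $\mathcal{U}'_L$ is locally free on an open set whose complement in $Y \times U'_Y(n,d,L)$ has codimension at least $2$ (as in the proof of Theorem \ref{thmpoincare}).

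Next I would reduce an arbitrary polarisation to the shape $a\alpha + b\theta_L$. By the see-saw argument already recorded before Theorem \ref{thmpoincare}, every line bundle on $Y \times U_Y(n,d,L)$ is $p_1^*(\text{line bundle on }Y) \otimes p_2^*(\text{line bundle on }U_Y(n,d,L))$; restricting to $Y \times U'_Y(n,d,L)$ and using $\mathrm{Pic}\, U'_Y(n,d,L) \cong \mathbb{Z}$ generated by $\theta_L$ (and the codimension $\ge 3$ statement so that restriction of Picard groups is injective with the same description), an ample class on $Y \times U'_Y(n,d,L)$ is of the form $a\alpha + b\theta_L$ with $a,b > 0$, where $\alpha$ is an ample divisor on $Y$. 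So it suffices to check slope-stability of $\mathcal{U}'_L$ with respect to $a\alpha + b\theta_L$ for all positive integers $a,b$.

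Then I would transfer stability from $\mathcal{U}_L$ to $\mathcal{U}'_L$. Given a torsionfree quotient $Q'$ of $\mathcal{U}'_L$ with torsionfree kernel, I would extend the corresponding subsheaf of $\mathcal{U}_L$ across the codimension $\ge 3$ locus by taking the saturation of its image (or equivalently push forward the reflexive hull), obtaining a quotient $Q$ of $\mathcal{U}_L$; because the degree of a sheaf with respect to a polarisation is computed on a general complete intersection curve, which by the codimension $\ge 2$ singular-locus remark can be taken inside $Y' \times U'_Y(n,d,L)$, one has $r(Q) = r(Q')$ and $d(Q) = d(Q')$ with respect to the (appropriately rescaled) polarisations; the same holds for $\mathcal{U}_L$ versus $\mathcal{U}'_L$. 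Theorem \ref{thmpoincare} gives $r(\mathcal{U}_L) d(Q) - r(Q) d(\mathcal{U}_L) > 0$, which translates directly into $\mu(Q') > \mu(\mathcal{U}'_L)$, establishing stability with respect to $a\alpha + b\theta_L$, hence with respect to any polarisation.

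The main obstacle I expect is the bookkeeping around the polarisation: one must be careful that when passing from $\mathcal{U}'_L$ back to $\mathcal{U}_L$ the ample class $a\alpha + b\theta_L$ on $Y \times U'_Y(n,d,L)$ really does come from an ample (or at least numerically suitable) class on $Y \times U_Y(n,d,L)$, since we do not know $U_Y(n,d,L)$ is locally factorial and $\theta_L$ on the whole space need only be a line bundle, not a generator of a rank-one Picard group. This is circumvented by noting that $d(F)$ for a torsionfree $F$ depends only on the restriction of $F$ and of the polarisation to a general complete intersection curve, which lives in the smooth open set $Y' \times U'_Y(n,d,L)$; thus all the relevant degree computations can be performed there, and the numerical inequality of Theorem \ref{thmpoincare} is exactly what is needed. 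Everything else is a routine codimension count identical to the ones already used in Lemma \ref{Cl} and Theorem \ref{xsmooth}.
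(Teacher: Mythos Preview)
Your argument is essentially correct, but it takes a genuinely different route from the paper. The paper does not deduce the corollary from Theorem \ref{thmpoincare} by extending subsheaves across $Y\times(U_Y(n,d,L)\setminus U'_Y(n,d,L))$; instead it simply re-runs the proof of Theorem \ref{thmpoincare} directly on $Y\times U'_Y(n,d,L)$, replacing the input Theorem \ref{xsmooth}(1) by Theorem \ref{xsmooth}(2) and using $\mathrm{Pic}\,U'_Y(n,d,L)\cong\mathbb{Z}\cdot\theta_L$ to see that every polarisation is already of the form $a\alpha+b\theta_L$. This avoids entirely the extension step, the worry about non-normality of $U_Y(n,d,L)$, and the bookkeeping you flag as the main obstacle. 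What your approach buys is that it treats Theorem \ref{thmpoincare} as a black box; what the paper's approach buys is that it bypasses the delicate question of extending a quotient (or subsheaf) over a possibly non-normal ambient space and comparing degrees across the restriction. One small caution on your write-up: the see-saw theorem cited in the paper is applied to the \emph{projective} variety $U_Y(n,d,L)$, not to $U'_Y(n,d,L)$, so strictly speaking you should justify the product decomposition of $\mathrm{Pic}(Y\times U'_Y(n,d,L))$ independently (e.g.\ via unirationality of $U'_Y(n,d,L)$ and cohomology-and-base-change), rather than by restricting the see-saw statement.
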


\begin{proof}
      This follows as in Theorem \ref{thmpoincare}  using Theorem \ref{xsmooth}(2) and the facts that the Picard group of $U'_Y(n,d,L)$ is isomorphic to $\mathbb{Z}$ and it is generated by $\theta_L$ \cite[Theorem I]{Bh1}. 
\end{proof}

\end{document}